\providecommand{\U}[1]{\protect\rule{.1in}{.1in}}
\newtheorem{theorem}{Theorem}[section]
\newtheorem{corollary}[theorem]{Corollary}
\newtheorem{example}[theorem]{Example}
\newtheorem{lemma}[theorem]{Lemma}
\newtheorem{proposition}[theorem]{Proposition}
\newtheorem{remark}[theorem]{Remark}
\newenvironment{proof}[1][Proof]{\textbf{#1.} }{\ \rule{0.5em}{0.5em}}
\newdimen\dummy
\let\pdfoutput=\undefined\fi
\begin{document}

\title{Galois Coverings of Pointed Coalgebras}
\author{William Chin\\DePaul University, Chicago, Illinois 60610}
\maketitle

\begin{abstract}
We introduce the concept of a Galois covering of a pointed coalgebra. The
theory developed shows that Galois coverings of pointed coalgebras can be
concretely expressed by smash coproducts using the coaction of the
automorphism group of the covering. Thus the theory of Galois coverings is
seen to be equivalent to group gradings of coalgebras. An advantageous feature
of the coalgebra theory is that neither the grading group nor the quiver is
assumed finite in order to obtain a smash product coalgebra.

\end{abstract}

\section{Introduction}

Every coalgebra is equivalent to basic coalgebra, whose simple comodules
appear with multiplicity one. Over an algebraically closed field basic
colagebra is pointed and embeds in the path coalgebra of its quiver. This
motivates the study of pointed coalgebras as subcoalgebras of path coalgebras.
The theory of coverings of coalgebras extends the theory of coverings of
finite-dimensional algebras via quivers with relations.

The theory of Galois coverings of quivers with relations is a standard tool in
the representation theory of finite dimensional algebras see e.g.
\cite{BG81},\cite{Gabriel81},\cite{Riedtmann80}. In this paper we introduce
the concept of coverings of pointed coalgebras, based on Galois coverings of
quivers. The theory developed shows that coverings of coalgebras can be
concretely expressed by smash coproducts using the coaction of the
automorphism group of coverings. Thus the theory of Galois coalgebra coverings
is seen to be equivalent to group gradings of coalgebras. One advantageous
feature of the coalgebra theory is that neither the grading group nor the
quiver is assumed finite in order to obtain a smash product coalgebra. In the
case that the grading group is infinite we do not need to pass to coverings of
$\Bbbk$-categories, as done in \cite{CibilsMarcos06}.

We begin with a review of path coalgebras and embeddings of pointed coalgebras
into the path coalgebra of their quivers. \ Next we turn to the theory of
Galois coverings of quivers in Section \ref{QC1}. In this context, coverings
and smash coproducts are known in graph theory as derived graphs
\cite{GrossTucker}. A covering of a quiver is specified by an arrow weighting
(also known as a voltage assignment), where group elements are assigned to
edges. Coverings are obtained from a smash coproduct construction (i.e. the
derived graph with respect to a voltage assignment) on the base quiver. For
the associated path coalgebras, the covering path coalgebra becomes a smash
coproduct coalgebra with a canonical homomorphism onto the base path coalgebra
(Theorem \ref{CSM}). We wish to restrict to subcoalgebras of the path
coalgebras and we provide conditions on the covering that characterize when
this is possible in Theorem \ref{Cov}, using a designated normal subgroup of
the fundamental group of the base quiver. The conditions guarantee that the
covering coalgebra is a smash coproduct over the homogeneous base coalgebra,
which is graded by the automorphism group of the covering. Thus the covering
quiver has a subcoalgebra whose comodule category is equivalent to the
category of graded comodules over the base coalgebra. Moreover, the methods
here provide a construction of universal covering coalgebra which provides a
way of constructing all connected gradings of a pointed subcoalgebra of a path
coalgebra. As a converse to these results, in Theorem \ref{smash=cov} we show
that any connected grading of a pointed coalgebra gives rise to a smash
coproduct that serves as a covering coalgebra.

A grading of a pointed coalgebra is specified by an arrow weighting that comes
from a vertex lifting of a covering of the quiver. Different liftings yield
possibly different gradings and isomorphic smash coproducts. We show
explicitly in Section $\ref{LVI}$ how liftings and gradings are related are
related via vertex weightings, and how they are equivalent via smash coproduct
coalgebra isomorphisms.

We close with examples including single and double loop quivers, the Kronecker
quiver, Example \ref{Kron}, and the basic coalgebra of quantum $SL(2)$ at root
of unity, Example \ref{SL2}. In these examples we examine coverings the
gradability of arbitrary finite-dimensional comodules.

\bigskip

We partially follow the approach of E. Green in \cite{Green83} where coverings
of quivers with relations were studied for locally finite quivers with
relations. Green's paper contains a definition of a Galois covering for
quivers with relations, and establishes a universal object in the category of
such coverings. He also showed that the representation theory of finitely
generated group-graded algebras is essentially equivalent to the theory of the
representation theory of Galois coverings of finite quivers with relations.
More precisely, given a finitely generated algebra presented by a finite
quiver with relations, the category of representations of a covering with
Galois group $G$ is equivalent to the category of $G$-graded modules over the
algebra. Other related work concerning coverings of quivers with relations and
coverings of $\Bbbk$-categories includes \cite{Martinez83}%
,\cite{CibilsMarcos06},\cite{LeMeur07}. In this paper we work directly with
coverings via smash coproduct coalgebras, thereby obtaining results on graded
comodules. Thus we do not work directly with representations of quivers as is
done in \cite{Green83}.

In a manner dual to the phenomenon for quivers with relations and
finite-dimensional algebras, a pointed coalgebra $B$ can be embedded in $\Bbbk
Q$ in more than one way (see Section \ref{PC}) . However, for many
presentations of algebras by quivers with relations, there is a canonical
presentation which results in universal fundamental group \cite{LeMeur07}. It
would be interesting to extend this sort of result to coalgebras.

\bigskip

The reader may refer to \cite{Massey91},\cite{Hatcher} for basic information
on covering spaces and \cite{GrossTucker} for combinatorial versions of
covering graphs and smash coproduct quivers, known there as derived graphs of
voltage graphs. Basic coalgebra theory, including the theory of pointed
coalgebras and path coalgebras, is covered in e.g. \cite{Chin2004}. The
articles \cite{ChinMontg},\cite{CKQ2002},\cite{Chin2002},\cite{Simson01}%
,\cite{NowakSimson2002},\cite{Simson04},\cite{Simson05},\cite{Simson07}
contain results concerning path coalgebras.

\section{Path coalgebras\label{PC}}

The vertex set of a quiver $Q$ is denoted by $Q_{0}$ and the arrow set is
denoted by $Q_{1}$. For each arrow $a$ the start (or source) of $a$ is denoted
by $s(a)$ and the terminal (or target) of $a$ is written $t(a)$ where
$s,t:Q_{1}\rightarrow Q_{0}$ are functions. \textit{Paths} in $Q$ are, by
definition, finite concatenations of arrows, and are always directed. Paths
are written from right to left and are of the form $a_{n}a_{n-1}...a_{1}$ with
$a_{i}\in Q$ and $s(a_{i})=t(a_{i-1})$ for $i=2,..n$. Using formal inverses of
arrows, we also consider possibly nondirected paths $a_{n}^{\pm}a_{n-1}^{\pm
}...a_{1}^{\pm}$ with $s(a^{-})=t(a)$ and $t(a^{-})=s(a),$ referred to as
\textit{walks in} $Q.$ Here we are slightly abusing terminology since paths
and walks are in general not elements of $Q.$ The set of paths of length $n$
is denoted by $Q_{n}$ and their span by $\Bbbk Q_{n}.$ For each pair $x,y\in
Q_{0}$ we write $Q(x,y)$ (resp. $Q^{\pm}(x,y)$) for the set of paths (resp.
walks) starting at $x$ and ending at $y.$ Here we have $B(x,y)=\Bbbk
Q(x,y)\cap B$ and $B=\bigoplus\limits_{x,y\in Q_{0}}B(x,y).$

J. A. Green (see \cite{Chin2004}) showed that the structure theory for
finite-dimensional algebras carries over well to coalgebras, with injective
indecomposable comodules replacing projective indecomposables. Define the
(\textit{Gabriel- }or\textit{\ Ext-}) \textit{quiver} of a coalgebra $C$ to be
the directed graph $Q(C)$ with vertices $\mathcal{G}$ corresponding to
isomorphism classes of simple comodules and $\mathrm{dim}_{\mathrm{k}%
}\mathrm{Ext}^{1}(h,g)$ arrows from $h$ to $g$, for all $h,g\in\mathcal{G}$.
The blocks of $C$ are (the vertices of) components of the undirected version
of the graph $Q(C)$. In other words, the blocks are the equivalence classes of
the equivalence relation on $\mathcal{G}$ generated by arrows. The
indecomposable or \textquotedblleft block\textquotedblright\ coalgebras are
the direct sums of injective indecomposables having socles from a given block.
When $C$ is a pointed coalgebra we may identify $\mathcal{G}$ with the set of
group-like elements $G(C)$ of $C$, and we may take the arrows to be a basis of
a nonuniquely chosen $\Bbbk-$space $I_{1}$ spanned by nontrivial skew
primitive elements. Here $C_{0}\oplus I_{1}=C_{1}$.

The \textbf{path coalgebra} $\Bbbk Q$ of a quiver $Q$ is defined to be the
span of all paths in $Q$ with coalgebra structure
\[%
\begin{array}
[c]{c}%
\Delta(p)=\sum_{p=p_{2}p_{1}}p_{2}\otimes p_{1}+t(p)\otimes p+p\otimes s(p)\\
\varepsilon(p)=\delta_{|p|,0}%
\end{array}
\]
where $p_{2}p_{1}$ is the concatenation $a_{t}a_{t-1}...a_{s+1}a_{s}%
...a_{1}\,$of the subpaths $p_{2}=a_{t}a_{t-1}...a_{s+1}$ and $p_{1}%
=a_{s}...a_{1}$ ($a_{i}\in Q_{0}$)$.$ Here $|p|=t$ denotes the length of $p$
and the starting vertex of $a_{i+1}\,$ is required to be the end of $a_{i}.$
Thus vertices are group-like elements, and if $a$ is an arrow $g\leftarrow h$,
with $g,h\in Q_{0},$ then $a$ is a $(g,h)$- skew primitive, i.e., $\Delta
a=g\otimes a+a\otimes h.$ It follows that that $kQ$ is pointed with coradical
$(kQ)_{0}=kQ_{0}\,$and the degree one term of the coradical filtration is
$(kQ)_{1}=kQ_{0}\oplus kQ_{1}.$ Moreover have the coradical grading $\Bbbk
Q=\bigoplus\limits_{n\geq0}\Bbbk Q_{n}$ by path length. The path coalgebra may
be identified with the cotensor coalgebra $\oplus_{n\geq0}(\Bbbk Q_{1})^{\Box
n}$ of the $\Bbbk Q_{0}$-bicomodule $\Bbbk Q_{1},$ cf. \cite{Nichols78}.

Let $B$ be a pointed coalgebra over a field $\Bbbk$. Then $B$ embeds
nonuniquely as an admissible subcoalgebra of the path coalgebra $\Bbbk Q$ of
the Gabriel quiver associated to $B.$ We review this embedding here (see
\cite{ChinMontg}, \cite{Woodcock97}, or \cite{Simson04}; cf. \cite{Nichols78}%
). Write $B=kQ_{0}\oplus I$ for a (nonunique) coideal $I,$ with projection
$\pi_{0}:B\rightarrow B_{0}$ along $I$, and write $\Bbbk Q=\Bbbk Q_{0}\oplus
J$ where $J=\Bbbk Q_{1}\oplus\Bbbk Q_{1}\Box\Bbbk Q_{1}\oplus...$ and $\Bbbk
Q=\Bbbk Q_{0}\oplus J$ is the cotensor coalgebra over $\Bbbk Q_{0}$. We
identify $B_{0}$ with $\Bbbk Q_{0},$ and let $I_{1}=I\cap B_{1}$, which we
identify with $\Bbbk Q_{1}$. Define the embedding
\[
\theta:B\rightarrow kQ=kQ_{0}\oplus J
\]
by $\theta(d)=\pi_{0}(d)+\sum_{n\geq1}\pi_{1}^{\otimes n}\Delta_{n-1}(d)$ for
all $d\in B,$ where $\pi_{1}:\mathrm{\Bbbk}Q\rightarrow I_{1}$ is a $B_{0}%
$-bicomodule projection onto $I_{1}=\Bbbk Q_{1}$.

It is very well-known that an algebra might be presented by quivers with
relations in essentially different ways . Of course this happens
coalgebraically as well. For example, let $Q$ be the quiver
\[
z\overset{a}{\underset{b}{\leftleftarrows}}y\overset{c}{\leftarrow}z
\]
and consider the subcoalgebra of $\Bbbk Q$ spanned by the arrows and vertices
together with the degree two element $ac.$ If we replace $ac$ by $ac+bc$ we
obtain a different, but isomorphic, subcoalgebra. In the embedding above, the
nonuniqueness can be seen to be a result of the choice of the skew primitive
space $I_{1}.$ It will be apparent in Section\ \ref{GQC} that these
subcoalgebras are associated to different subgroups of fundamental group of
the quiver.

Henceforth we shall assume a fixed embedding of the pointed coalgebra $B$ into
its path coalgebra so that $B\subseteq\Bbbk Q$ is an admissible subcoalgebra,
i.e., $B$ contains the vertices and arrows of $Q.$ We shall always assume that
$B$ is indecomposable and, equivalently, that $Q$ is connected as a graph.

\section{Quivers and coverings\label{QC1}}

The quiver $Q$ may be realized as a topological space, momentarily dropping
the orientation of arrows, as in e.g. \cite{Massey91}, \cite{Hatcher}. Let
$F:\tilde{Q}\rightarrow Q$ be a topological Galois covering with base points
$\tilde{x}_{0},x_{0}\in Q_{0},$ $F(\tilde{x}_{0})=x_{0}$. All coverings are
assumed to be connected. The topological covering space $\tilde{Q}$ can be
realized as a quiver first by giving it a graph structure and then by
assigning an orientation consistent with the orientation on $Q.$ The vertices
of $\tilde{Q}$ are the union of the fibers of the the vertices of $Q$, and the
arrows are the liftings of the arrows of $Q.$ We can then consider $F$ to be a
morphism of quivers, as done in \cite{Green83}. Coverings can be viewed as
quiver maps purely combinatorially, cf. \cite{Martinez83},\cite{LeMeur07}. One
can discretely construct a universal cover as the quiver as the quiver whose
vertices are in bijection with equivalence (homotopy) classes of walks in $Q.$
Then any covering is isomorphic to an orbit quiver under the free action of a
subgroup $G$ of the fundamental group $\pi_{1}(Q,x_{0})$ of $Q$.

For the purposes of this paper we discretely define a covering of a quiver $Q$
to be a surjective morphism of connected quivers $F:\tilde{Q}\rightarrow Q$
such that for all $\tilde{x}\in\tilde{Q}_{0}$, $F$ restricts to bijections
between the set of arrows starting at $\tilde{x}$ and the set of arrows
starting at $F(\tilde{x}),$ and also between the set of arrows ending at
$\tilde{x}$ and the arrows ending at $F(\tilde{x}).$

Let $\tilde{Q}\overset{F}{\rightarrow}Q\overset{F^{\prime}}{\leftarrow
}Q^{\prime}$ be two coverings of $Q.$ A morphism $\theta:F\rightarrow
F^{\prime}$ in the category of coverings of $Q$ is a quiver map $\theta
:\tilde{Q}\rightarrow Q^{\prime}$ such that $F^{\prime}\theta=F$. We shall
refer to such maps $\theta$ as a \textit{covering morphisms of quivers}.

The fundamental group functor provides an injective group homomorphism
$F_{\ast}:\pi_{1}(\tilde{Q},\tilde{x})\rightarrow\pi_{1}(Q,x)$ for $\tilde
{x}\in\tilde{Q}_{0}$ lifting $x\in Q_{0}$ for any covering $F:\tilde
{Q}\rightarrow Q.$ The covering is said to be \textit{Galois} (also known as
\textit{regular }or\textit{ normal}) if $F_{\ast}(\pi_{1}(\tilde{Q},\tilde
{x}_{0}))$ is a normal subgroup of $\pi_{1}(Q,x_{0})$. In this case $\pi
_{1}(Q,x_{0})/F_{\ast}(\pi_{1}(\tilde{Q},\tilde{x}_{0}))$ is the automorphism
group of $F$, viewed either combinatorially as a group of quiver
automorphisms, or topologically as a group of covering space automorphisms.

A fundamental property of coverings is that each walk in $Q$ can be lifted
uniquely to a walk in $\tilde{Q}$ once the starting point in $\tilde{Q}$ is
specified. More specifically if $w$ is a walk starting at \ $x\in Q_{0}$ and
$F(\tilde{x})=x$, then there is a unique walk $\tilde{w}$ starting at
$\tilde{x}$ and lifting $w.$ If $w$ is path, then the lifting $\tilde{w}$ is a
path. We say that a function $L:Q_{0}\rightarrow\tilde{Q}_{0}$ is a
\textit{lifting} of $F$ if $FL=\mathrm{id}_{Q_{0}}.$ For a walk $w$ in $Q$, we
shall write $L(w)$ to denote the unique lifting of $w$ starting at $L(s(w)).$

For a group $G$, we say that a function $\delta:Q_{1}\rightarrow G$ is an
\textit{arrow weighting}. For later use we also define a \textit{vertex
weighting} to be a function $\gamma:Q_{0}\rightarrow G.$ The map $\delta$ may
be extended to all walks by setting
\[
\delta(a_{n}^{e_{n}}\cdots a_{2}^{e_{2}}a_{1}^{e_{1}})=\delta(a_{n})^{e_{n}%
}\cdots\delta(a_{2})^{e_{2}}\delta(a_{1})^{e_{1}},
\]
$e_{i}=\pm$. We shall henceforth identify arrow weightings and their extension
to walks or paths in this manner.

Next let $G=\pi_{1}(Q,x_{0}))/F_{\ast}(\pi_{1}(\tilde{Q},\tilde{x}_{0}))$ be
the automorphism group of the Galois covering $F.$ There is an\textit{\ }arrow
weighting $\delta_{L}:Q_{1}\rightarrow G$ as follows. Let $w\in Q^{\pm}(x,y)$.
Let $L(w)$ be the lifting of $w$ starting at $L(x)$ and ending at $L(y)^{g}$,
$g\in G$. We define the map by letting $\delta_{L}(w)=g.$ Clearly $\delta_{L}$
depends on the choice of lifting $L.$ Restricting to arrows, we obtain an
arrow weighting $\delta_{L}.$ Now let $w=a_{n}^{e_{n}}\cdots a_{2}{}^{e_{2}%
}a_{1}^{e_{1}}\in Q^{\pm}(x,z)$. It is true that $\delta_{L}(a_{n}^{e_{n}%
}\cdots a_{2}^{e_{2}}a_{1}^{e_{1}})=\delta_{L}(a_{n})^{e_{n}}\cdots\delta
_{L}(a_{2})^{e_{2}}\delta_{L}(a_{1})^{e_{1}},$ for if $w=rq,$ with $q\in
Q^{\pm}(x,y)$ and $r\in Q^{\pm}(y,z),$ then $L(q)\in\tilde{Q}^{\pm
}(L(x),L(y)^{\delta_{L}(q)})$ and $L(r)\in\tilde{Q}(L(y),L(z)^{\delta_{L}%
(r)});$ hence $L(r)^{\delta_{L}(q)}L(q)\in\tilde{Q}(L(x),L(z)^{\delta
_{L}(r)\delta_{L}(q)})$ is a lifting of $rq$ and we conclude that $\delta
_{L}(rq)=\delta_{L}(r)\delta_{L}(q).$ Thus the weighting $\delta_{L}$ respects
concatenation in agreement with the extension from arrows to walks at the
beginning of this paragraph.

An arrow weighting $\delta:Q_{1}\rightarrow G$ is said to be
\textit{connected} if for all $x,y\in Q_{0}$ and $g\in G,$ there exists a walk
$w$ in $Q$ from $x$ to $y$ such that $\delta(w)=g.$ Assume $\delta$ is
connected and let $Q\rtimes G$ be the quiver with underlying vertex set
$(Q\rtimes G)_{0}=Q_{0}\times G=\{x\rtimes g|x\in Q_{0},g\in G\}$ and arrow
set $(Q\rtimes G)_{1}=Q_{1}\times G=\{a\rtimes g|a\in Q_{1},g\in G\},$
declaring that $s(a\rtimes g)=s(a)\rtimes g$ and $t(a\rtimes g)=t(a)\rtimes
\delta(a)g$. This construction shall be called the \textit{smash coproduct}
\textit{quiver} of $Q$ and $G.$ Let $F:Q\rtimes G\rightarrow Q$ be defined by
$F(u\rtimes g)=u$ for $u\in Q_{0}\cup Q_{1}$, $g\in G,$ and note that $F$ is a
surjective quiver morphism. Also, $\delta$ is connected if and only if
$Q\rtimes G$ is connected as a graph. It is known that

\begin{proposition}
\label{Prop1} If $\delta:Q_{1}\rightarrow G$ is a connected arrow weighting,
the canonical map $F:Q\rtimes G$ $\rightarrow Q$ is a Galois covering with
automorphism group $G$.
\end{proposition}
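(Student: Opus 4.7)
The plan is to establish three things in turn: that $F$ is a covering, that the group $G$ acts on $Q\rtimes G$ by covering automorphisms freely and transitively on fibers, and that these automorphisms exhaust the deck group — from which the Galois property and the identification $\mathrm{Aut}(F)=G$ follow.

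First I would verify the local bijection condition in the definition of covering. Fix $\tilde{x}=x\rtimes g\in (Q\rtimes G)_0$. An arrow $a\rtimes h$ has source $\tilde{x}$ iff $s(a)=x$ and $h=g$, so $F$ sends the arrows out of $\tilde{x}$ bijectively to $\{a\in Q_1:s(a)=x\}=F(\tilde{x})$-outgoing arrows. Similarly an arrow $a\rtimes h$ ends at $\tilde{x}$ iff $t(a)=x$ and $\delta(a)h=g$, forcing $h=\delta(a)^{-1}g$; for each $a$ ending at $x$ there is a unique such lift, so $F$ is a bijection on incoming arrows as well. Surjectivity of $F$ and its being a quiver morphism are immediate from the definitions, and the connectedness of $Q\rtimes G$ has already been observed (under the connectedness hypothesis on $\delta$) in the paragraph preceding the proposition.

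Next I would introduce a right action of $G$ on $Q\rtimes G$ by
\[
(x\rtimes h)\cdot k \;=\; x\rtimes hk,\qquad (a\rtimes h)\cdot k \;=\; a\rtimes hk.
\]
A direct check using $s(a\rtimes h)=s(a)\rtimes h$ and $t(a\rtimes h)=t(a)\rtimes\delta(a)h$ shows that this is a quiver automorphism of $Q\rtimes G$, and clearly $F\circ(\cdot k)=F$, so each $k\in G$ acts as a deck transformation. The action is free since $hk=h$ forces $k=e$, and it is transitive on each fiber $F^{-1}(x)=\{x\rtimes h:h\in G\}$ since $(x\rtimes h)\cdot(h^{-1}h')=x\rtimes h'$.

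Finally I would invoke the standard consequence for coverings of connected quivers: a deck transformation is determined by its value at any single vertex. Given any covering automorphism $\varphi$ of $F$ and any fiber point $\tilde{x}_0=x_0\rtimes e$, transitivity supplies a unique $k\in G$ with $\varphi(\tilde{x}_0)=\tilde{x}_0\cdot k$; then $\varphi$ and the right action of $k$ are two covering automorphisms agreeing at $\tilde{x}_0$, so they coincide on the connected quiver $Q\rtimes G$. Hence $\mathrm{Aut}(F)=G$ as groups. Because the full deck group acts transitively on the fiber $F^{-1}(x_0)$, the subgroup $F_\ast(\pi_1(Q\rtimes G,\tilde{x}_0))$ coincides with its own normalizer in $\pi_1(Q,x_0)$, which — in particular — makes it normal, so $F$ is Galois with automorphism group $G$.

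The routine steps are all bookkeeping with the formulas $s(a\rtimes h)=s(a)\rtimes h$ and $t(a\rtimes h)=t(a)\rtimes\delta(a)h$; the one point that requires slight care, and which I view as the main conceptual obstacle (especially because $G$ may be infinite so one cannot simply compare cardinalities of the fiber and $\mathrm{Aut}(F)$), is the rigidity statement that a deck transformation is determined by its value at a single vertex of the connected covering quiver. This is a combinatorial counterpart of the unique lifting property of walks already recorded in the excerpt, and is what forces $\mathrm{Aut}(F)=G$ rather than merely containing $G$.
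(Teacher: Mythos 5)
Your argument is essentially correct and follows the route that the paper only sketches in the one-line remark after its proof (``Alternatively one can observe that the right action of $G$ on $Q\rtimes G$ is free\dots''), rather than the route the paper actually takes. The paper verifies the covering condition exactly as you do, but then establishes the Galois property group-theoretically: it observes that $[w]\mapsto\delta(w)$ is a surjective homomorphism $\pi_{1}(Q,x_{0})\rightarrow G$ whose kernel is precisely the set of classes of closed walks lifting to closed walks, i.e.\ $F_{\ast}(\pi_{1}(Q\rtimes G))$; normality and the identification of the automorphism group with $G$ then fall out of the first isomorphism theorem in one stroke. Your version instead exhibits the deck group explicitly via the free right action $(u\rtimes h)\cdot k=u\rtimes hk$ and its transitivity on fibers, plus the rigidity of deck transformations of a connected covering. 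Both are valid; the paper's argument is shorter and gives the quotient description $G\cong\pi_{1}(Q,x_{0})/F_{\ast}(\pi_{1}(Q\rtimes G))$ directly, while yours makes the concrete action of $G$ (which is used repeatedly later in the paper) visible from the outset. Note that your argument implicitly uses unique walk lifting, which is legitimate since you have already verified that $F$ is a covering before invoking it.

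One sentence in your final paragraph is stated incorrectly and you should fix it: you write that transitivity of the deck group on the fiber implies that $H=F_{\ast}(\pi_{1}(Q\rtimes G,\tilde{x}_{0}))$ ``coincides with its own normalizer,'' and that this makes $H$ normal. That is backwards on both counts: $N(H)=H$ would say the deck group $N(H)/H$ is trivial, and a subgroup equal to its own normalizer is normal only if it is the whole group. The correct statement of the standard correspondence is that the orbit of $\tilde{x}_{0}$ under $\mathrm{Aut}(F)$ is in bijection with $N(H)/H$ while the full fiber is in bijection with the coset space $\pi_{1}(Q,x_{0})/H$; transitivity therefore forces $N(H)=\pi_{1}(Q,x_{0})$, which is exactly normality of $H$. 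With that correction the argument goes through.
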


\begin{proof}
By the hypothesis that $\delta$ is connected, it is immediate $Q\rtimes G$ is
connected as a graph. By the construction of $Q\rtimes G,$ each $y\in Q_{0}$
has fiber $\{y\rtimes g|g\in G\}$ and each subgraph $z\overset{b}{\leftarrow
}y\overset{a}{\leftarrow}x$ in $Q$ with arrows $a,b$ and vertices $x,y,z$
lifts to a subgraph
\[
z\rtimes\delta(b)g\overset{b\rtimes g}{\longleftarrow}y\rtimes g\overset
{a\rtimes\delta^{-1}(a)g}{\longleftarrow}x\rtimes\delta^{-1}(a)g
\]
for all $g\in G.$ This observation makes it evident that there is a bijection
between the set of arrows ending at (resp. starting at) $y$ and the arrows
ending at (resp. starting at) $y\rtimes g.$ Also it is clear that these arrows
sets are disjoint for different $g.$ It follows that $F:Q\rtimes G$
$\rightarrow Q$ is a covering map.

We note that if $[w]\in\pi_{1}(Q,x_{0})$ is a walk class with closed walk $w,$
then the map $\pi_{1}(Q,x_{0})\rightarrow G$ defined by $[w]\mapsto\delta(w)$
is onto since $\delta$ is connected. Its kernel is the set of walk classes of
walks that lift to closed walks in $Q\rtimes G,$ i.e. the normal subgroup
$F_{\ast}(\pi_{1}(Q\rtimes G)).$ Thus the covering $F$ is Galois and the
automorphism group of the covering is $G=\pi_{1}(Q,x_{0})/F_{\ast}(\pi
_{1}(Q\rtimes G))$.
\end{proof}

\bigskip

Alternatively one can observe that the right action of $G$ on $Q\rtimes G$ is
free and that the orbit quiver is isomorphic to $Q.$

\section{Graded coalgebras from coverings\label{GQC}}

A coalgebra $C$ is said to be graded by the group $G$ if $C$ is the direct sum
of $\Bbbk$-subspaces $C=\oplus_{g\in G}C_{g}$ and
\[
\Delta(C_{g})\subseteq\sum_{\substack{ab=g\\a,b\in G}}C_{a}\otimes C_{b}%
\]
for all $g\in G$ and $\varepsilon(C_{g})=0$ for all $g\neq1.$ The element
$c\in C_{g}$ is said to be \textit{homogeneous of degree} $\delta(c)=g$ and we
shall adapt Sweedler notation and write $\Delta(c)=\sum c_{1}\otimes c_{2}$
always assuming homogeneous terms in the sum. In this case $C$ is a right
$\Bbbk G$-comodule coalgebra via the structure map $\rho:C\rightarrow
C\otimes\Bbbk G$, $\rho(c)=c\otimes g$ for all $g\in G$ and $c\in C_{g}$.
$\ $Conversely, every right $\Bbbk G$-comodule coalgebra is a $G$-graded coalgebra.

Let $C$ be a $G$-graded coalgebra. The smash coproduct coalgebra of $C$ and
$\Bbbk G$ is denoted by $C\rtimes\Bbbk G$ is defined as the $\Bbbk$-vector
space $C\otimes\Bbbk G$ with $\Delta:C\rtimes\Bbbk G\rightarrow(C\rtimes\Bbbk
G)\otimes(C\rtimes\Bbbk G)$ given by%
\[
\Delta(c\rtimes g)=\sum(c_{1}\rtimes\delta(c_{2})g)\otimes(c_{2}\rtimes g)
\]
for all homogeneous $c\in C$ and $g\in G.$ Let $F:C\rtimes\Bbbk G\rightarrow
C,$ $c\rtimes g\mapsto c$ be the canonical colagebra map onto $C.$ $G$ acts
canonically on the right as coalgebra automorphisms of $C\rtimes\Bbbk G$ via
$(c\rtimes g)^{h}=c\rtimes gh$ for all $h\in G.$ It is clear the action of $G$
preserves $F$, i.e. $F=F\circ$\thinspace$^{h}$.

\begin{proposition}
[\cite{DNRV}]The category of graded right comodules $\mathrm{Gr}^{C}$ of $C$
is equivalent the comodule category $\mathcal{M}^{C\rtimes\Bbbk G}$ over
$C\rtimes\Bbbk G$.
\end{proposition}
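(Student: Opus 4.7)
The plan is to construct mutually quasi-inverse functors $\Phi: \mathrm{Gr}^C \to \mathcal{M}^{C \rtimes \Bbbk G}$ and $\Psi: \mathcal{M}^{C \rtimes \Bbbk G} \to \mathrm{Gr}^C$, both acting as the identity on underlying vector spaces, so that the task reduces to transferring the extra structure.

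For $\Psi$ I first verify that the two canonical projections $F: C \rtimes \Bbbk G \to C$, $c \rtimes g \mapsto c$, and $\pi: C \rtimes \Bbbk G \to \Bbbk G$, $c \rtimes g \mapsto \varepsilon_C(c)g$, are coalgebra homomorphisms: the former is immediate from the formula $\Delta(c\rtimes g) = \sum (c_1 \rtimes \delta(c_2)g)\otimes(c_2\rtimes g)$; the latter uses the grading axiom $\varepsilon(C_h) = 0$ for $h \neq 1$ to collapse $(\pi \otimes \pi)\Delta(c \rtimes g)$ to $\varepsilon_C(c)(g\otimes g)$. Given $N \in \mathcal{M}^{C \rtimes \Bbbk G}$ with coaction $\tilde\rho_N$, corestricting along $F$ and $\pi$ produces a right $C$-coaction $\rho_C := (\mathrm{id}_N \otimes F)\tilde\rho_N$ and a right $\Bbbk G$-coaction (i.e., a $G$-grading $N = \bigoplus_g N_g$). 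Applying $\mathrm{id}_N \otimes F \otimes \pi$ to the coassociativity of $\tilde\rho_N$ then yields the graded compatibility $\rho_C(N_g) \subseteq \sum_{ab=g} N_a \otimes C_b$, so $\Psi(N) \in \mathrm{Gr}^C$.

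For $\Phi$, given $M = \bigoplus_g M_g \in \mathrm{Gr}^C$ with coaction $\rho_M(m) = \sum m_{(0)} \otimes m_{(1)}$, I equip the same vector space with the coaction $\tilde\rho(m) = \sum m_{(0)} \otimes (m_{(1)} \rtimes \gamma(m))$ on homogeneous $m$, where the group element $\gamma(m) \in G$ is forced by the requirement that $(\mathrm{id}_M \otimes F)\tilde\rho = \rho_M$ and that $(\mathrm{id}_M \otimes \pi)\tilde\rho$ recovers the $\Bbbk G$-coaction coming from the grading. Coassociativity is verified by expanding both $(\tilde\rho \otimes \mathrm{id})\tilde\rho(m)$ and $(\mathrm{id} \otimes \Delta)\tilde\rho(m)$ into triple Sweedler expressions and matching the three tensor factors using the graded identity $\deg(m_{(0)})\,\delta(m_{(1)}) = \deg(m)$ together with the twist $\delta(c_2)g$ in the smash-coproduct comultiplication; counitality is immediate from $\varepsilon_{C \rtimes \Bbbk G}(c \rtimes g) = \varepsilon_C(c)$ and the counit of $\rho_M$.

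Since both functors act as the identity on underlying vector spaces, the remaining checks $\Psi\circ\Phi = \mathrm{id}_{\mathrm{Gr}^C}$ and $\Phi\circ\Psi \cong \mathrm{id}_{\mathcal{M}^{C\rtimes\Bbbk G}}$ reduce to a direct calculation: reassembling $\rho_C$ and the grading from $\tilde\rho$ via $F$ and $\pi$ and then applying $\Phi$ returns $\tilde\rho$, and conversely. Morphism correspondence is automatic since a $\Bbbk$-linear map commutes with $\tilde\rho$ if and only if it commutes with $\rho_C$ and respects the grading. The main obstacle is the coassociativity verification for $\Phi$: because $G$ is in general non-abelian, the twist $\delta(c_2)g$ in $\Delta_{C \rtimes \Bbbk G}$ forces a non-trivial choice of $\gamma(m)$, and the matching of the middle tensor factors only closes thanks to the precise cancellation afforded by the graded condition on $M$.
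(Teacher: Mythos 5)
Your overall architecture --- two functors acting as the identity on underlying vector spaces, with $\Psi$ obtained by corestriction along the coalgebra projections $C\rtimes\Bbbk G\rightarrow C$ and $C\rtimes\Bbbk G\rightarrow\Bbbk G$, and $\Phi$ obtained by twisting the $C$-coaction into the smash coproduct --- is exactly the correspondence the paper records in the paragraph following the proposition (the paper itself only sketches it and defers details to the cited reference). However, there is a genuine inconsistency in how you pin down the twist $\gamma(m)$, and it sits precisely at the step you yourself flag as the main obstacle. You determine $\gamma$ by requiring that $(\mathrm{id}_M\otimes\pi)\tilde\rho$ recover the $\Bbbk G$-coaction of the grading, with $\pi(c\rtimes g)=\varepsilon_C(c)g$; this forces $\gamma(m)=\delta(m)$. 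That choice is not coassociative. Writing $\sum m_0\otimes m_1\otimes m_2$ for the iterated coaction, with $\delta(m_0)\delta(m_1)\delta(m_2)=\delta(m)$, the middle tensor factors of $(\tilde\rho\otimes\mathrm{id})\tilde\rho(m)$ and $(\mathrm{id}\otimes\Delta)\tilde\rho(m)$ are $m_1\rtimes\gamma(m_0')$ (where $m_0'$ has degree $\delta(m_0)\delta(m_1)$) and $m_1\rtimes\delta(m_2)\gamma(m)$ respectively, so one needs $\gamma(a)=b\,\gamma(ab)$ for all $a,b\in G$. Putting $a=1$ shows the only normalized solution is $\gamma(g)=g^{-1}$; the choice $\gamma(g)=g$ fails even for abelian $G$ unless every element squares to the identity.

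The repair is the one the paper uses: set $\tilde\rho(m)=\sum m_{0}\otimes(m_{1}\rtimes\delta(m)^{-1})$ and, correspondingly, take the projection to $\Bbbk G$ to be $c\rtimes g\mapsto\varepsilon_C(c)g^{-1}$. Both versions of the projection (with $g$ or with $g^{-1}$) are coalgebra maps, so your verification of that point is fine, but only the inverted one is compatible with the coassociative twist: with your $\pi$ and the correct $\gamma(m)=\delta(m)^{-1}$, the composite $\Psi\Phi$ returns the grading with all degrees inverted rather than the original grading, so your claimed identity $\Psi\circ\Phi=\mathrm{id}_{\mathrm{Gr}^{C}}$ fails. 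As written, your two normalization conditions on $\gamma$ contradict one another, and with either resolution one of your asserted properties (coassociativity of $\Phi(M)$, or $\Psi\Phi=\mathrm{id}$) breaks down. Inserting the inverse consistently in both places closes the gap, and the resulting argument coincides with the paper's sketch.
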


A graded comodule $M$ acquires the structure of a $C\rtimes\Bbbk G$-comodule
given by $\rho^{\prime}(m)=\sum m_{0}\otimes(m_{1}\rtimes\delta(m)^{-1}),$
$m\in M$. If $N$ is a $C\rtimes\Bbbk G$-comodule, then $N $ acquires a right
$\Bbbk G$-comodule structure via the coalgebra map $C\rtimes\Bbbk
G\rightarrow\Bbbk G,$ $c\rtimes g\mapsto\varepsilon(c)g^{-1}$; $N$ is a right
$C$-comodule via the coalgebra map $C\rtimes\Bbbk G\rightarrow\Bbbk G$,
$c\rtimes g\mapsto c$. Thus $N$ corresponds to an object of Gr$^{C}.$ See
\cite{DNRV} for more details.

\bigskip

Consider a Galois covering $F:\tilde{Q}\rightarrow Q$ with automorphism group
$G$ and lifting $L$. We saw in the previous section that there is an arrow
weighting associated to $L.$ The path coalgebra $\Bbbk Q$ is similarly a
$G$-graded coalgebra as follows: Let $p$ be a path in $Q(x,y)$. Let
$L(p)=\tilde{p}$ be the lifting of $p$ starting at $L(x)=\tilde{x}$ and ending
at $L(y)^{g}$. We define the degree map $\delta(p)=g.$ Suppose $p$ is the
concatenation of paths $p=rq.$ Then $L(p)=L(r)^{\delta(q)}L(q)$ so
$\delta(p)=\delta(r)\delta(q).$ It follows that $\delta$ determines a
coalgebra grading of $\Bbbk Q$ depending on the choice of lifting $L.$ It is
apparent that grading is determined by the arrow weighting obtained by
restricting to $Q_{1}.$ We accordingly form the smash coproduct $\Bbbk
Q\rtimes\Bbbk G=\Bbbk Q\rtimes_{L}\Bbbk G,$ with the canonical projection
$F_{L}:\Bbbk Q\rtimes_{L}\Bbbk G\rightarrow\Bbbk Q$, $F(p\rtimes g)=p$, using
this coaction of $\Bbbk G$ on $\Bbbk Q,$ sometimes leaving the lifting $L$ implicit.

\begin{theorem}
\label{CSM}Let $F:\tilde{Q}\rightarrow$ $Q$ a Galois covering and let
$L:Q_{0}\rightarrow\tilde{Q}_{0}$ be a lifting. There is a coalgebra
isomorphism $\psi:\Bbbk\tilde{Q}\rightarrow\Bbbk Q\rtimes\Bbbk G$ with
$F_{L}\circ\psi=F$ and a Galois covering isomorphism from $F_{L}:Q\rtimes
G\rightarrow Q$ to $F:\tilde{Q}\rightarrow Q$.
\end{theorem}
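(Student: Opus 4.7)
The plan is to build $\psi$ directly on the path basis of $\Bbbk\tilde{Q}$, using the fact that each path in $\tilde{Q}$ is uniquely determined by the path it covers together with its starting vertex in the fiber. Given a path $\tilde{p}$ in $\tilde{Q}$, let $p = F(\tilde{p})$ and write the start vertex as $s(\tilde{p}) = L(s(p))^{h}$ for the unique $h \in G$ that sends $L(s(p))$ to $s(\tilde{p})$. I would then define $\psi(\tilde{p}) = p \rtimes h$ and extend linearly. The inverse is clear: given $p \rtimes h$, take the unique lift of $p$ starting at $L(s(p))^{h}$. This gives a $\Bbbk$-linear isomorphism on path bases, and I would note on the side that it sends vertices to vertices and arrows to arrows, inducing a bijection of underlying quiver sets between $\tilde{Q}$ and $Q \rtimes G$.

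Next I would verify that $\psi$ is a coalgebra map. The only nontrivial point is compatibility with comultiplication, so fix a path $\tilde{p}$ lifting $p$ with starting fiber label $h$. Any decomposition $p = p_{2}p_{1}$ lifts uniquely to a decomposition $\tilde{p} = \tilde{p}_{2}\tilde{p}_{1}$ where $\tilde{p}_{1}$ lifts $p_{1}$ starting at $L(s(p))^{h}$, hence $\tilde{p}_{1}$ ends at $L(t(p_{1}))^{\delta_{L}(p_{1})h}$ by the definition of $\delta_{L}$, and $\tilde{p}_{2}$ then begins at that same vertex. Consequently $\psi(\tilde{p}_{1}) = p_{1}\rtimes h$ and $\psi(\tilde{p}_{2}) = p_{2}\rtimes \delta_{L}(p_{1})h$. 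Plugging these into $(\psi\otimes\psi)\Delta(\tilde{p})$ and comparing with the stated formula
\[
\Delta(p\rtimes h) = \sum_{p=p_{2}p_{1}}(p_{2}\rtimes \delta_{L}(p_{1})h)\otimes(p_{1}\rtimes h) + (t(p)\rtimes \delta_{L}(p)h)\otimes(p\rtimes h) + (p\rtimes h)\otimes(s(p)\rtimes h)
\]
gives term-by-term agreement, using that $s(\tilde{p}) = L(s(p))^{h}$ and $t(\tilde{p}) = L(t(p))^{\delta_{L}(p)h}$ so the boundary vertex terms match. Counit compatibility is immediate since $\varepsilon(\tilde{p}) = \delta_{|\tilde p|,0} = \delta_{|p|,0} = \varepsilon(p\rtimes h)$.

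Verifying $F_{L}\circ\psi = F$ reduces to checking on basis elements: $F_{L}(\psi(\tilde{p})) = F_{L}(p\rtimes h) = p = F(\tilde{p})$. For the covering isomorphism statement, I would observe that the restriction of $\psi$ to vertices and arrows is exactly a morphism of quivers $\tilde{Q} \to Q\rtimes G$ commuting with the projections to $Q$; since the weighting defining $Q \rtimes G$ is $\delta_{L}$, the endpoints of arrows are correctly matched by the very computation above applied to a single arrow $a$, giving $s(a\rtimes h) = s(a)\rtimes h$ and $t(a\rtimes h) = t(a)\rtimes \delta_{L}(a)h$. The $G$-equivariance (with respect to the right action on fibers on the $\tilde{Q}$ side and the action $(c\rtimes g)^{k} = c\rtimes gk$ on the smash coproduct side) is also basis-level bookkeeping: replacing $\tilde{p}$ by $\tilde{p}^{k}$ shifts the starting fiber label from $h$ to $hk$.

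The only real obstacle is keeping the bookkeeping of fiber labels consistent when the comultiplication is applied, in particular making sure the ``middle'' group element at the splitting point is $\delta_{L}(p_{1})h$ and not $\delta_{L}(p_{2})h$ or similar; this follows because paths are written right-to-left and $\tilde{p}_{1}$ is traversed first from $s(\tilde{p})$. Once that convention is pinned down, everything else in the proof is a direct calculation from the definitions of $\delta_{L}$, of $F_{L}$, and of $\Delta$ on $\Bbbk Q\rtimes_{L}\Bbbk G$.
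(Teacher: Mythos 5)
Your proof is correct and is essentially the paper's own argument: the paper likewise defines $\psi(\tilde p)=F(\tilde p)\rtimes\sigma(\tilde p)$ where $\sigma(\tilde p)$ is exactly your fiber label $h$ of $s(\tilde p)$ relative to $L$, exhibits the same inverse $p\rtimes g\mapsto L(p)^{g}$, and matches the comultiplications term by term over decompositions $p=p_{2}p_{1}$ using unique path lifting. Your explicit check of the middle label $\delta_{L}(p_{1})h$ and the restriction to vertices and arrows for the covering isomorphism are the same steps the paper carries out.
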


\begin{proof}
If $\tilde{p}$ is a path in $\tilde{Q},$ then $\tilde{p}$ is a lifting of
$F(\tilde{p})$. Hence there exists $\sigma(\tilde{p})\in G$ such that
$LF(\tilde{p})=\tilde{p}^{\sigma(\tilde{p})}.$ Since $\sigma(\tilde{p}%
)=\sigma(s(\tilde{p})),$ $\sigma=\sigma_{L}:\tilde{Q}_{0}\rightarrow G$ is a
vertex weighting on $\tilde{Q}$ that extends to a function on all paths in
$\tilde{Q},$ as specified. Let $g\in G$ and let $p\in Q(x,y)$. Define maps
$\phi:\Bbbk Q\rtimes\Bbbk G\rightarrow\Bbbk\tilde{Q}$ and $\psi:\Bbbk\tilde
{Q}\rightarrow\Bbbk Q\rtimes\Bbbk G$ by $\phi(p\rtimes g)=L(p)^{g}$ and
$\psi(\tilde{p})=F(\tilde{p})\rtimes\sigma(\tilde{p})$ for paths $p,\tilde{p}%
$. Then
\[
\Delta(p\rtimes g)=\sum_{p=rq}(r\rtimes\delta(q)g)\otimes(q\rtimes g)
\]
and
\[
\Delta(L(p)^{g})=\sum_{p=rq}L(r)^{\delta(q)g}\otimes L(q)^{g}%
\]
using the fact that $L(q)$ is the lifting starting at $L(x)$ and ending at
$L(y)^{\delta(q)}=s(L(r)^{\delta(q)}).$ It follows immediately that $\phi$ is
coalgebra map. Next observe that $\psi\phi(p\rtimes g)=\psi(L(p)^{g}%
)=F(L(p)^{g})\rtimes\sigma(L(p)^{g})=p\rtimes g.$ Similarly, $\phi
\psi=id_{\Bbbk\tilde{Q}}.$ Thus we have shown that $\phi$ and $\psi$ are
mutually inverse coalgebra isomorphisms. It is immediate that $F_{L}\psi=F.$
The isomorphism restricts to an isomorphism on vertices and arrows, so there
is a covering isomorphism $\tilde{Q}\cong Q\rtimes G$ as well.
\end{proof}

\bigskip

The result shows that the isomorphism type of the smash coproduct coalgebra
$\Bbbk Q\rtimes\Bbbk G$ does not depend on the choice of lifting $L,$ which is
determined by the grading $\delta_{L}$. In addition we have

\begin{proposition}
The smash coproduct coalgebra $\Bbbk Q\rtimes\Bbbk G$ is isomorphic to the
path coalgebra $\Bbbk(Q\rtimes G)$ of the smash coproduct quiver.
\end{proposition}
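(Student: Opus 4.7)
The plan is to derive this as an essentially immediate corollary of Theorem \ref{CSM}, applied to the specific Galois covering supplied by Proposition \ref{Prop1}. Starting from the connected arrow weighting $\delta: Q_1 \to G$ used to define the smash coproduct coalgebra $\Bbbk Q \rtimes \Bbbk G$, Proposition \ref{Prop1} gives the Galois covering $F: Q \rtimes G \to Q$ with automorphism group $G$. The plan is to identify an appropriate lifting, check that it reproduces $\delta$, and then quote Theorem \ref{CSM}.

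I would choose the obvious lifting $L: Q_0 \to (Q \rtimes G)_0$ defined by $L(x) = x \rtimes 1$, which clearly satisfies $FL = \mathrm{id}_{Q_0}$. The key verification is that the arrow weighting $\delta_L$ obtained from $L$ via the recipe of Section \ref{QC1} coincides with the given $\delta$. For an arrow $a \in Q_1(x,y)$, the unique lift starting at $L(x) = x \rtimes 1$ is $a \rtimes 1$, whose target is $t(a) \rtimes \delta(a) = y \rtimes \delta(a)$; under the right $G$-action $(u \rtimes g)^h = u \rtimes gh$ this equals $L(y)^{\delta(a)}$, so $\delta_L(a) = \delta(a)$.

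With $\delta_L = \delta$, the $G$-grading on $\Bbbk Q$ determined by the lifting $L$ is precisely the original grading used to form $\Bbbk Q \rtimes \Bbbk G$. Theorem \ref{CSM} then produces a coalgebra isomorphism $\psi: \Bbbk(Q \rtimes G) \to \Bbbk Q \rtimes \Bbbk G$ compatible with the canonical projections to $\Bbbk Q$, which is the desired conclusion.

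The only real obstacle is bookkeeping: one must be careful that the convention for the right action of $G$ and the convention $t(a \rtimes g) = t(a) \rtimes \delta(a)g$ line up with the Sweedler-style formula $\Delta(c \rtimes g) = \sum (c_1 \rtimes \delta(c_2)g) \otimes (c_2 \rtimes g)$ used in Section \ref{GQC}. Once these match, the proof is essentially a one-line reduction to Theorem \ref{CSM}. An alternative direct route would be to define $\psi(p \rtimes g) = p \otimes g$ on paths of $Q \rtimes G$ and verify the comultiplication formulas by hand, but this is exactly the computation already embedded in the proof of Theorem \ref{CSM}.
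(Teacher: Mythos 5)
Your argument is correct and there is no circularity: Theorem \ref{CSM} and Proposition \ref{Prop1} are both established independently of this Proposition, and the one computation you do need --- that the lifting $L(x)=x\rtimes 1$ induces the arrow weighting $\delta_L=\delta$ --- is carried out correctly, since the unique lift of $a\in Q_1(x,y)$ starting at $x\rtimes 1$ is $a\rtimes 1$ with target $y\rtimes\delta(a)=L(y)^{\delta(a)}$ under the right $G$-action. The paper takes a more direct route: it writes down an explicit map $E:\Bbbk Q\rtimes\Bbbk G\rightarrow\Bbbk(Q\rtimes G)$ sending $p\rtimes g$, for $p=a_n\cdots a_1$, to the concatenation $(a_n\rtimes\delta(a_{n-1}\cdots a_1)g)\cdots(a_1\rtimes g)$, and asserts that checking it is a coalgebra isomorphism is straightforward. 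These are really two organizations of the same isomorphism: if you unwind the map $\phi(p\rtimes g)=L(p)^g$ from the proof of Theorem \ref{CSM} in the special case $\tilde Q=Q\rtimes G$, $L(x)=x\rtimes 1$, you recover exactly the paper's $E$. What your route buys is that the verification that the map respects comultiplication is inherited from Theorem \ref{CSM} rather than redone; what the paper's route buys is an explicit formula for the isomorphism without needing to invoke Proposition \ref{Prop1} or match up the deck-transformation conventions. Your closing caveat about aligning the right-action convention $(u\rtimes g)^h=u\rtimes gh$ with $t(a\rtimes g)=t(a)\rtimes\delta(a)g$ is the right thing to worry about, and it does check out.
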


\begin{proof}
Define a map $E:\Bbbk Q\rtimes\Bbbk G\rightarrow\Bbbk(Q\rtimes G)$ on elements
$p\rtimes g$ where $p=a_{n}\cdots a_{2}a_{1}$ ($a_{i}\in Q_{1}$) is a path in
$Q$ and $g\in G$ by letting $E(p\rtimes g)$ be the concatenation%
\begin{align*}
&  (a_{n}\rtimes\delta(a_{n-1})\cdots\delta(a_{1})g)\cdots(a_{2}\rtimes
\delta(a_{1})g)(a_{1}\rtimes g)\\
&  =(a_{n}\rtimes\delta(a_{n-1}\cdots a_{1})g)\cdots(a_{2}\rtimes\delta
(a_{1})g)(a_{1}\rtimes g)
\end{align*}
noting that $E$ identifies the group-likes $x\rtimes g$ and the vertices
(denoted by the same symbol). Similarly $E$ identifies the skew-primitives
$a\rtimes g$ and the arrows. It is straightforward to check that $E$ is a
coalgebra isomorphism.
\end{proof}

\bigskip

We provide some terminology and notation:

\begin{itemize}
\item Let $B\subseteq\Bbbk Q$ be an admissible subcoalgebra. Let $b=\sum_{i\in
I}\lambda_{i}p_{i}\in B(x,y)$ with $x,y\in Q_{0}$ and distinct paths $p_{i}.$
We say that $b$ is a \textit{minimal element\ }of $B$ if $\sum_{i\in
I^{\prime}}\lambda_{i}p_{i}\notin B(x,y)$ for every nonempty proper subset
$I^{\prime}\subset I$, and $|I|\geq2.$ Clearly every element of $B$ is a
linear combination of paths and minimal elements.

\item Fix a base vertex $x_{0}\in Q_{0}$. We define a symmetric relation
$\sim$ on paths by declaring $p\sim q$ if there is a minimal element
$b=\sum_{i\in I}\lambda_{i}p_{i}\in B(x,y)$ where the $p_{i}$ are distinct
paths, $\lambda_{i}\in\Bbbk$, $x,y\in Q_{0}$ and $p=p_{1}$, $q=p_{2}.$ We
define $N(B,x_{0})$ to be the \textit{normal} subgroup of $\pi_{1}(B,x_{0})$
generated by equivalence (homotopy) classes of walks $w^{-1}p^{-1}qw$ where
$p,q$ are paths in $Q(x,y)$ with $p\sim q$ and $w$ is a walk from $x_{0}$ to
$x.$

\item Consider a Galois covering $F:\tilde{Q}\rightarrow Q$ with automorphism
group $G$ and lifting $L$. For each minimal element $b=\sum\lambda_{i}p_{i}\in
B$ we put $L(b)=\sum\lambda_{i}L(p_{i})$ and we let $\tilde{B}$ denote the
$\Bbbk$-span of $\{L(b)|L$ a lifting, $b\in B$ a minimal element or a path\}.
We say that the restriction $F:\tilde{B}\rightarrow B$ is a Galois
\textit{coalgebra covering} if every minimal element of $B$ can be lifted to
$\tilde{B}$ in the following sense: for every minimal element \ $b\in B(x,y)$
with $x,y\in Q_{0}$ and $\tilde{x}\in\tilde{Q}_{0},$ there exists $\tilde
{y}\in\tilde{Q}_{0}$ and a minimal element $\tilde{b}\in\tilde{B}(\tilde
{x},\tilde{y})$ such that $F(\tilde{b})=b.$
\end{itemize}

\begin{proposition}
\label{min}Let $F:\tilde{B}\rightarrow B$ be a Galois coalgebra covering. Then
\newline(a) $F(\min(\tilde{B}))=\min(B)$\newline(b) $F_{\ast}(N(\tilde
{B},\tilde{x}_{0}))=N(B,x_{0})$ for all $x_{0}\in Q,$ $\tilde{x}_{0}\in
\tilde{Q}$ with $F(\tilde{x}_{0})=x_{0}.$
\end{proposition}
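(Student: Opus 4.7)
My plan is to prove (a) first and then deduce (b) using the path and walk lifting properties of Galois coverings. The inclusion $F(\min(\tilde{B})) \supseteq \min(B)$ is built into the definition of a Galois coalgebra covering: every minimal $b \in B(x,y)$ has a designated lift $\tilde{b} \in \tilde{B}(\tilde{x}, \tilde{y})$ with $F(\tilde{b}) = b$, and this lift is itself minimal because any proper sub-sum of $\tilde{b}$ lying in $\tilde{B}$ would project to a proper sub-sum of $b$ lying in $B$, contradicting minimality of $b$.

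For the reverse inclusion in (a), I would start with a minimal $\tilde{b} = \sum_{i \in I} \lambda_i \tilde{p}_i \in \tilde{B}(\tilde{x}, \tilde{y})$ and set $p_i = F(\tilde{p}_i)$, $b = F(\tilde{b}) \in B(x,y)$. The paths $p_i$ are automatically distinct, since distinct paths starting at a common vertex $\tilde{x}$ project to distinct paths under $F$. The work is to rule out any proper sub-sum $\sum_{i \in I'} \lambda_i p_i$ belonging to $B$. Assuming such a sub-sum lies in $B(x,y)$, I would decompose it as $\sum_k \mu_k c_k$ with each $c_k$ a path of $Q$ lying in $B(x,y)$ or a minimal element of $B(x,y)$. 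Choosing a lifting $L$ with $L(x) = \tilde{x}$, the definition of $\tilde{B}$ and the Galois covering hypothesis give $L(c_k) \in \tilde{B}$ for every $k$. Since $L(p_i)$ and $\tilde{p}_i$ are both paths starting at $\tilde{x}$ projecting onto $p_i$, they coincide, so equating path expansions in $\Bbbk \tilde{Q}$ yields
\[
\sum_{i \in I'} \lambda_i \tilde{p}_i \;=\; \sum_k \mu_k L(c_k) \;\in\; \tilde{B}(\tilde{x}, \tilde{y}),
\]
contradicting minimality of $\tilde{b}$.

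Part (b) then follows from (a) together with walk lifting. For $N(B, x_0) \subseteq F_{\ast}(N(\tilde{B}, \tilde{x}_0))$, I would pick a generator $[w^{-1} p^{-1} q w]$ arising from a minimal $b = \sum \lambda_i p_i \in B(x,y)$ with $p = p_1$, $q = p_2$, lift $w$ to a walk $\tilde{w}$ starting at $\tilde{x}_0$ and ending at some $\tilde{x}$ over $x$, and use the covering hypothesis to lift $b$ to a minimal $\tilde{b} = \sum \lambda_i \tilde{p}_i \in \tilde{B}(\tilde{x}, \tilde{y})$; then $[\tilde{w}^{-1} \tilde{p}_1^{-1} \tilde{p}_2 \tilde{w}] \in N(\tilde{B}, \tilde{x}_0)$ maps under $F_{\ast}$ to the original class. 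Conversely, a generator $[\tilde{w}^{-1} \tilde{p}^{-1} \tilde{q} \tilde{w}]$ of $N(\tilde{B}, \tilde{x}_0)$ comes from a minimal element of $\tilde{B}$ which, by (a), projects to a minimal element of $B$ witnessing $F(\tilde{p}) \sim F(\tilde{q})$. The main obstacle is the bookkeeping step in (a): turning the abstract hypothesis $\sum_{i \in I'} \lambda_i p_i \in B$ into a concrete sub-sum in $\tilde{B}$. This hinges crucially on the precise definition of $\tilde{B}$ as a span of lifted paths and minimal elements together with uniqueness of path lifting at the chosen base vertex, and I would likely isolate the underlying ``coherent lifting'' observation as a short preliminary lemma.
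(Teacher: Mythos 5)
Your proposal is correct and takes essentially the same approach as the paper: uniqueness of path (and walk) lifting sets up a correspondence between minimal elements of $\tilde{B}$ based at $\tilde{x}$ and minimal elements of $B$ based at $x$, and part (b) follows by matching the generators $[w^{-1}p^{-1}qw]$ with their lifts. Your write-up merely fills in details the paper leaves as ``immediate'' (notably the decomposition into paths and minimal elements used to show $F(\tilde{b})$ is minimal), but the underlying argument is the same.
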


\begin{proof}
Since each path in $Q$ lifts to a unique path in $\tilde{Q}$ starting at
$\tilde{x},$ it follows that each minimal element in $b\in B$ can be lifted
uniquely to an element of $\tilde{B}(\tilde{x},\tilde{y})$ starting at
$\tilde{x}$ and ending at $\tilde{y}$ for some $\tilde{y}\in\tilde{Q}_{0}.$
Now letting $\tilde{x}$ vary over $F^{-1}(x),$ we see that $F^{-1}(b)$ is the
consists of the set of liftings, one for each $\tilde{x}.$ It is immediate
that each such lifting is minimal in $\tilde{B}.$ Conversely, if $\tilde{b}%
\in\tilde{B}(\tilde{x},\tilde{y})$ is minimal, then it is the unique lifting
of $F(\tilde{b})$ starting at $\tilde{x};$ it follows that $F(\tilde{b})$ is
minimal. The conclusions follow.
\end{proof}

\bigskip

The fundamental example is given as follows. Let $B\subseteq\Bbbk Q$ be a
homogenous admissible subcoalgebra with respect to the grading given by an
arrow weighting $\delta:Q_{1}\rightarrow G$. The grading is said to be
connected if the arrow weighting is connected. If $b=\sum_{i\in I}\lambda
_{i}p_{i}\in B(x,y)$ is a minimal element, then it is necessarily homogeneous.
Consider the canonical map $F:\Bbbk Q\rtimes G\rightarrow\Bbbk Q$ defined by
$F(p\rtimes g)=p$ and consider the restriction to $B\rtimes\Bbbk G\rightarrow
B.$ Then under the identification of $\Bbbk Q\rtimes\Bbbk G$ with $\Bbbk
\tilde{Q}$ we easily see that $\tilde{B}=B\rtimes\Bbbk G.$ The liftings of
minimal element $b\in B$ are given by $b\rtimes g$ with $g\in G.$

\begin{theorem}
\label{Cov}The following are equivalent for a subcoalgebra $B\subseteq\Bbbk Q$
and Galois quiver covering $F:\tilde{Q}\rightarrow Q.$\newline(a) $B$ is a
homogeneous subcoalgebra of $\Bbbk Q.$\newline(b) $N(B,x_{0})\subseteq
F_{\ast}(\pi_{1}(\tilde{Q},\tilde{x}_{0}))$ for all $x_{0}\in Q,$ $\tilde
{x}_{0}\in\tilde{Q}$ with $F(\tilde{x}_{0})=x_{0}.$\newline(c) $F:\tilde
{B}\rightarrow B$ is a Galois coalgebra covering.\newline(d) $B$ is a
homogenous subcoalgebra of $\Bbbk Q$ and the grading is connected.
\end{theorem}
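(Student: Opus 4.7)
The plan is to fix a lifting $L$ of $F$, let $\delta = \delta_L : Q_1 \to G$ be the associated arrow weighting, and treat ``the grading'' on $\Bbbk Q$ throughout as the $G$-grading induced by $\delta$. I would establish the cycle (a) $\Rightarrow$ (c) $\Rightarrow$ (b) $\Rightarrow$ (a) and observe (a) $\Leftrightarrow$ (d). For the latter equivalence, the argument used in the proof of Proposition \ref{Prop1}, transported via Theorem \ref{CSM}, shows that $[w] \mapsto \delta_L(w)$ is a surjective homomorphism $\pi_1(Q,x_0) \twoheadrightarrow G$ with kernel $F_*(\pi_1(\tilde Q,\tilde x_0))$. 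Surjectivity is precisely connectedness of $\delta_L$, so (a) automatically supplies connectedness and (d) $\Rightarrow$ (a) is trivial.

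For (a) $\Rightarrow$ (c) one invokes the fundamental example preceding the theorem: under the identification $\Bbbk\tilde Q \cong \Bbbk Q \rtimes \Bbbk G$ from Theorem \ref{CSM}, homogeneity of $B$ yields $\tilde B = B \rtimes \Bbbk G$, and each minimal $b \in B(x,y)$ lifts to the minimal elements $b \rtimes g \in \tilde B(x \rtimes g,\, y \rtimes \delta(b)g)$ for $g \in G$.

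For (c) $\Rightarrow$ (b), let $p \sim q$ in $Q(x,y)$, witnessed by a minimal $b = \sum \lambda_i p_i \in B(x,y)$ with $p_1 = p$, $p_2 = q$. Fix $\tilde x$ over $x$; by (c), $b$ lifts to a minimal element $\tilde b \in \tilde B(\tilde x,\tilde y)$ for some $\tilde y$, and uniqueness of path lifting (together with the requirement that $\tilde b$ live in $\tilde B(\tilde x,\tilde y)$) forces $\tilde b = \sum_i \lambda_i L_{\tilde x}(p_i)$ with every $L_{\tilde x}(p_i)$ terminating at $\tilde y$. Hence the closed walk $p^{-1}q$ at $x$ lifts to the closed walk $L_{\tilde x}(p)^{-1} L_{\tilde x}(q)$ at $\tilde x$. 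Taking $\tilde x$ to be the endpoint of the lift of $w$ starting at $\tilde x_0$ gives $[w^{-1} p^{-1} q w] \in F_*(\pi_1(\tilde Q,\tilde x_0))$, and since such classes generate $N(B,x_0)$, condition (b) follows.

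For (b) $\Rightarrow$ (a), a minimal $b = \sum \lambda_i p_i \in B(x,y)$ and any walk $w$ from $x_0$ to $x$ produce classes $[w^{-1} p_i^{-1} p_j w] \in N(B,x_0) \subseteq F_*(\pi_1(\tilde Q,\tilde x_0)) = \ker(\delta_L)$; hence $\delta_L(p_i) = \delta_L(p_j)$ for all $i,j$, so $b$ is homogeneous. As $B$ is spanned by paths and minimal elements, $B$ is homogeneous. The principal technical obstacle is bookkeeping with basepoints---translating between $\pi_1$ at $x$ and at $x_0$---which is handled by lifting $w$ along $F$ and using normality of $F_*(\pi_1(\tilde Q,\tilde x_0))$ guaranteed by the Galois hypothesis.
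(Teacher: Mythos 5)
Your proof is correct, but it decomposes the equivalence differently from the paper, and the two versions place the technical weight in different spots. The paper proves (a)$\Rightarrow$(b)$\Rightarrow$(c)$\Rightarrow$(a); its crux is (b)$\Rightarrow$(c), where one lifts each path $p_i$ of a minimal element from a common start $\tilde{x}$ and must argue the endpoints $\tilde{y}_i$ coincide, which is done by observing that $[p_2p_1^{-1}]$ lies in $F_{\ast}(\pi_1(\tilde{Q},\tilde{y}_1))$ and invoking the covering-space fact that such a class lifts to a closed walk (Massey). You instead run (a)$\Rightarrow$(c)$\Rightarrow$(b)$\Rightarrow$(a): you get (a)$\Rightarrow$(c) essentially for free from the fundamental example and the identification $\Bbbk\tilde{Q}\cong\Bbbk Q\rtimes\Bbbk G$ of Theorem \ref{CSM} (homogeneity makes all lifts of the $p_i$ from $x\rtimes g$ terminate at $y\rtimes\delta(b)g$), and your (b)$\Rightarrow$(a) replaces the topological lemma by the purely group-theoretic identification of $F_{\ast}(\pi_1(\tilde{Q},\tilde{x}_0))$ with the kernel of $[w]\mapsto\delta_L(w)$, so that containment of the generators $[w^{-1}p_i^{-1}p_jw]$ in that kernel immediately forces $\delta_L(p_i)=\delta_L(p_j)$. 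This buys a cleaner, more algebraic argument at the cost of leaning harder on the smash-coproduct machinery; the paper's route keeps the implications closer to the raw covering-space definitions. Two small points to tighten: your claim that surjectivity of $[w]\mapsto\delta_L(w)$ on $\pi_1(Q,x_0)$ \emph{is} connectedness of $\delta_L$ needs the (easy, but worth stating) step that closed-walk surjectivity at one basepoint plus connectedness of $Q$ yields walks of arbitrary weight between arbitrary vertex pairs --- the paper avoids this by directly producing a walk in $\tilde{Q}(L(x),L(y)^g)$ from connectedness of $\tilde{Q}$; and in (a)$\Rightarrow$(c) you should note explicitly that the lift based at an arbitrary $\tilde{x}=L(x)^h$ is $L'(b)$ for the lifting $L'$ with $L'(x)=\tilde{x}$, so that it genuinely lies in $\tilde{B}$ as defined.
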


\begin{proof}
(a) implies (b). Suppose that $B$ is a graded subcoalgebra of $\Bbbk Q.$ Let
$[w^{-1}q^{-1}pw]$ be a generating element of $N(B,x)$ with $p\sim q.$ This
means that there are distinct paths $p,q\in Q(x,y)\ $with $b=p+\lambda
q+....\in B(x,y)$ minimal. The minimality of $b$ of forces it to be
homogeneous in the grading determined by $L.$ Accordingly, we have
$t(L(p))=t(L(q))=L(y)^{\delta(p)}=L(y)^{\delta(q)}.$ Now it is easy to check
that
\[
L(w)^{-1}(L(q)^{\delta(w)})^{-1}L(p)^{\delta(w)}L(w)
\]
is a closed path in $\tilde{Q}$ starting at $L(x_{0})$ lifting $w^{-1}%
q^{-1}pw.$ This shows that $[w^{-1}q^{-1}pw]\in F_{\ast}(\pi_{1}(\tilde
{Q},x_{0})).$

Assume as in (b) that $N(B,x_{0})\subseteq F_{\ast}(\pi_{1}(\tilde{Q}%
,\tilde{x}_{0}))$ and let $b=\sum_{i\in I}\lambda_{i}p_{i}\in B(x,y)$ with
$x,y\in Q_{0}$ and distinct paths $p_{i}$ be a minimal element of $B$. For
each $i\in I$, let $\tilde{p}_{i}$ be a lifting of $p_{i}$ starting at
$\tilde{x}=L(x)$ and ending at, say, $\tilde{y}_{i}$. Our assumption implies
that $N(B,y)\subseteq F_{\ast}(\pi_{1}(\tilde{Q},\tilde{y}_{1}))$ by the a
standard isomorphism (given by conjugation by a walk class from $y$ to $x_{0}%
$)$.$ Observe that $\tilde{p}_{2}\tilde{p}_{1}^{-1}$ is a walk from $\tilde
{y}_{1}$ to $\tilde{y}_{2}$ lifting the closed walk $p_{2}p_{1}^{-1}.$
Therefore $[p_{2}p_{1}^{-1}]\in N(B,y)\subseteq F_{\ast}(\pi_{1}(\tilde
{Q},\tilde{y}_{1})),$ and we see that $p_{2}p_{1}^{-1}$ also has a lifting
that is a closed walk starting and ending at $\tilde{y}_{1}$ Therefore
$\tilde{p}_{2}\tilde{p}_{1}^{-1}$ is a closed walk by e.g. \cite[Ch. 8, Lemma
3.3]{Massey91} and $\tilde{y}_{1}=\tilde{y}_{2}.$ This argument shows that the
$\tilde{y}_{i}$ are all equal. Thus $\tilde{b}=\sum_{i\in I}\lambda_{i}%
\tilde{p}_{i}$ is a lifting of $b$ in $B(\tilde{x},\tilde{y}_{1})$ and it is
easily seen to be minimal. This proves (b) implies (c).

Assume (c) and let $b=\sum_{i\in I}\lambda_{i}p_{i}\in B(x,y)$ with $x,y\in
Q_{0}$ be a minimal element of $B$. For each $i\in I$, let $\tilde{p}_{i}$ be
a lifting of $p_{i}$ starting at $\tilde{x}=L(x)$ and, by definition of the
grading, ending at $L(y_{i})^{\delta(p_{i})}$. The assumption forces the
$\delta(p_{i})$ to all be equal. This shows that $b$ is homogenous and thus
that $B$ is a graded subcoalgebra of $\Bbbk Q.$

We have shown that (a)-(c) are equivalent. We complete the proof by showing
that any grading of $B$ is connected. Let $x,y\in Q_{0}$ and let $g\in G.$
Then there exists a walk $\tilde{w}\in\tilde{Q}(L(x),L(y)^{g}).$ So
$w=F(\tilde{w})$ is a walk from $x$ to $y$ with lifting $\tilde{w}$, and
evidently $\delta(w)=g.$ Thus the grading is connected.
\end{proof}

\bigskip

Let $\tilde{Q}\overset{F}{\rightarrow}Q\overset{F^{\prime}}{\leftarrow
}Q^{\prime}$\ be two coverings of $Q.$\ Recall that a morphism $F\rightarrow
F^{\prime}$\ in the category of coverings of $Q$\ is given by a quiver
morphism $\theta:\tilde{Q}\rightarrow Q^{\prime}$\ such that $F^{\prime}%
\theta=F$. Consider coalgebra coverings of $B\subset\Bbbk Q$ arising from the
quiver coverings
\[%
\begin{array}
[c]{ccccc}%
\Bbbk\tilde{Q} & \overset{F}{\longrightarrow} & \Bbbk Q & \overset{F^{\prime}%
}{\longleftarrow} & \Bbbk Q^{\prime}\\
\cup &  & \cup &  & \cup\\
\tilde{B} & \longrightarrow & B & \longleftarrow & B^{\prime}%
\end{array}
\]
where the vertical maps are the presumed inclusions of admissible
subcolagebras. A \textit{morphism of coalgebra coverings} $(F:\tilde
{B}\rightarrow B)\rightarrow(F^{\prime}:B^{\prime}\rightarrow B)$ is a
morphism of quiver coverings $\theta$ as above such that the $\theta$
restricts to coalgebra map $\tilde{B}\rightarrow B^{\prime}$ (again abusively
denoting both the map on path coalgebras and its restriction by $\theta).$

\begin{corollary}
Assume any of the equivalent conditions of the Theorem hold and fix a lifting
$L:Q_{0}\rightarrow\tilde{Q}_{0}$. Then the coalgebra covering $F:\tilde
{B}\rightarrow B$ is isomorphic to $F_{L}:B\rtimes$ $G\rightarrow B.$
\end{corollary}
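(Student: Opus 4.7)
The plan is to derive the corollary as a direct consequence of Theorem \ref{CSM} combined with the equivalent conditions of Theorem \ref{Cov}. By Theorem \ref{CSM} the lifting $L$ already provides a coalgebra isomorphism $\psi:\Bbbk\tilde{Q}\to\Bbbk Q\rtimes_{L}\Bbbk G$ with $F_{L}\circ\psi=F$, so all that remains is to show that $\psi$ restricts to an isomorphism $\tilde{B}\to B\rtimes\Bbbk G$.

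First I would invoke the equivalence (a)$\Leftrightarrow$(c) of Theorem \ref{Cov}: under the hypothesis, $B$ is a homogeneous subcoalgebra of $\Bbbk Q$ with respect to the grading $\delta=\delta_{L}$, and every minimal element $b=\sum_{i\in I}\lambda_{i}p_{i}\in B(x,y)$ is homogeneous, say of degree $g_{0}$. Consequently all of the paths $p_{i}$ occurring in $b$ satisfy $\delta(p_{i})=g_{0}$, so their liftings $L(p_{i})$ all share the common source $L(x)$ and common target $L(y)^{g_{0}}$. This makes $L(b)=\sum_{i\in I}\lambda_{i}L(p_{i})$ a well-defined minimal element of $\tilde{B}(L(x),L(y)^{g_{0}})$, and the $G$-action on $\tilde{Q}$ translates it to a lifting $L(b)^{h}$ starting at each fiber point $L(x)^{h}$. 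By Proposition \ref{min} these are precisely the minimal elements of $\tilde{B}$, and together with the path liftings $L(p)^{h}$ they span $\tilde{B}$.

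Next I would compute $\psi$ on these spanning elements. For $\tilde{b}=L(b)^{h}$ we have $F(\tilde{b})=b$ and $\sigma(\tilde{b})=h$, since $\sigma$ depends only on the source vertex and $LF(L(x)^{h})=L(x)$. Hence $\psi(L(b)^{h})=b\rtimes h$, and extending this over paths and minimal elements $b$ and group elements $h$ identifies the image of $\tilde{B}$ with the $\Bbbk$-span of $\{b\rtimes h:b\in B,\ h\in G\}=B\rtimes\Bbbk G$. Since $\psi$ is already a coalgebra isomorphism on all of $\Bbbk\tilde{Q}$, its restriction is a coalgebra isomorphism $\tilde{B}\xrightarrow{\sim}B\rtimes\Bbbk G$, and the intertwining $F_{L}\circ\psi=F$ from Theorem \ref{CSM} passes to the subcoalgebras, giving an isomorphism of coalgebra coverings. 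The only subtlety, which I expect is the main bookkeeping point, is to verify that the $G$-translates of liftings $L(b)$ really exhaust the spanning set used to define $\tilde{B}$ (i.e.\ that taking a different lifting $L'$ of $F$ produces elements already in $G\cdot L(B)$); this follows because any two liftings differ pointwise by an element of $G$, so $L'(b)=L(b)^{h}$ for a suitable $h$ determined by the homogeneity established above.
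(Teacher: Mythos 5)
Your argument is correct and follows essentially the same route as the paper: the paper likewise reduces to the covering isomorphism $\Bbbk\tilde{Q}\cong\Bbbk Q\rtimes\Bbbk G$ of Theorem \ref{CSM} and the observation that $B\rtimes\Bbbk G$ is the span of all liftings of paths and minimal elements of $B$, which you verify in more detail using the homogeneity of minimal elements from Theorem \ref{Cov}. Your extra bookkeeping (that all liftings $L'(b)$ arise as $G$-translates $L(b)^{h}$, and that $\psi(L(b)^{h})=b\rtimes h$) is exactly the content the paper leaves as ``easy to check.''
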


\begin{proof}
It is easy to check that $B\rtimes$ $G$ is the span of all liftings to
$Q\rtimes G$ of paths and minimal elements of $B.$ The result follows from
Theorem \ref{Cov}, which says that a lifting $L$ gives rise to a covering
isomorphism from $F_{L}:Q\rtimes G\rightarrow Q$ to $F:\tilde{Q}\rightarrow
Q.$
\end{proof}

\begin{lemma}
\label{morph}Let $\theta:\tilde{B}\rightarrow B^{\prime}$ be a morphism of
Galois coalgebra coverings of $B$. Then the map $\theta:\tilde{B}\rightarrow
B^{\prime}$ is a Galois coalgebra covering.
\end{lemma}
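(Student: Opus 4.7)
The plan is to verify two things: that $\theta$ is a Galois covering at the quiver level, and that the lifting property for minimal elements required by the definition of a Galois coalgebra covering is satisfied for $\theta:\tilde{B}\to B'$. Together with the bulleted definition in Section \ref{GQC}, these two facts will give the claim.

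For the quiver-level claim, I would first check that $\theta:\tilde{Q}\to Q'$ is a quiver covering. Since $F=F'\theta$ and both $F$ and $F'$ restrict to bijections on the set of arrows starting (resp.\ ending) at each vertex, the factor $\theta$ is forced to restrict to such bijections as well. Surjectivity of $\theta$ on vertices follows from connectedness of $Q'$: given $y'\in Q'_0$ and $\tilde{x}\in\tilde{Q}_0$ with $\theta(\tilde{x})=x'$, take a walk $w'$ in $Q'$ from $x'$ to $y'$, lift $F'(w')$ through $F$ to a walk $\tilde{w}$ in $\tilde{Q}$ starting at $\tilde{x}$, and use uniqueness of lifts through $F'$ to see that $\theta(\tilde{w})=w'$; in particular $y'=\theta(t(\tilde{w}))$. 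For the Galois property, I choose compatible base points so that $F_{\ast}(\pi_{1}(\tilde{Q},\tilde{x}_{0}))\subseteq F'_{\ast}(\pi_{1}(Q',x'_{0}))\subseteq\pi_{1}(Q,x_{0})$, which holds because $F_{\ast}=F'_{\ast}\theta_{\ast}$. The smallest of these is normal in $\pi_{1}(Q,x_{0})$ by the Galois hypothesis on $F$, hence is automatically normal in the intermediate subgroup. Consequently $\theta$ is a Galois quiver covering, whose automorphism group is the quotient $F'_{\ast}(\pi_{1}(Q'))/F_{\ast}(\pi_{1}(\tilde{Q}))$.

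For the second step, let $b'\in B'(x',y')$ be a minimal element. By Proposition \ref{min}(a) applied to $F':B'\to B$, the image $F'(b')\in B(F'(x'),F'(y'))$ is minimal. Pick any $\tilde{x}\in\theta^{-1}(x')$; then $F(\tilde{x})=F'(x')=s(F'(b'))$, so the lifting property of the Galois coalgebra covering $F:\tilde{B}\to B$ produces a minimal element $\tilde{b}\in\tilde{B}(\tilde{x},\tilde{y})$ with $F(\tilde{b})=F'(b')$. Both $\theta(\tilde{b})$ and $b'$ are then liftings through $F'$ of the minimal element $F'(b')$ starting at $x'$; by the uniqueness of such liftings established inside the proof of Proposition \ref{min}, they coincide, so $\theta(\tilde{b})=b'$. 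This produces the required minimal lift of $b'$ starting at the prescribed preimage $\tilde{x}$, which is exactly the lifting condition in the definition of a Galois coalgebra covering.

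The main obstacle I anticipate is the normality/base-point bookkeeping that produces the Galois property of $\theta$, together with the verification that $\theta$ is locally bijective on arrow stars and surjective on vertices. Once these quiver-theoretic facts are in place, the minimal-element lifting property is essentially a formal consequence of Proposition \ref{min} combined with the uniqueness of minimal lifts observed in its proof, and no further calculation with coalgebra structure is required.
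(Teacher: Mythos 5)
Your proof is correct, but it reaches the conclusion by a genuinely different route in both halves. For the quiver-level step, you argue combinatorially (local bijectivity of $\theta$ on arrow stars forced by $F=F'\theta$, surjectivity via connectedness and unique walk lifting), whereas the paper passes to the topological realization and cites a covering-space lemma from \cite{Massey91}; your normality argument for the Galois property is then the same as the paper's, transporting $F_{\ast}(\pi_{1}(\tilde{Q},\tilde{x}_{0}))\trianglelefteq\pi_{1}(Q,x_{0})$ into the intermediate subgroup $F'_{\ast}(\pi_{1}(Q',x'_{0}))$ via the injectivity of $F'_{\ast}$. The real divergence is in the coalgebra-covering step: the paper verifies the fundamental-group criterion $N(B',x'_{0})\subseteq\theta_{\ast}(\pi_{1}(\tilde{Q},\tilde{x}_{0}))$ using Proposition \ref{min}(b) and then invokes Theorem \ref{Cov} ((b)$\Rightarrow$(c)), while you verify the minimal-element lifting condition directly: push $b'$ down to the minimal element $F'(b')$ of $B$ via Proposition \ref{min}(a), lift it through $F$ to $\tilde{b}$ starting at the prescribed $\tilde{x}$, and identify $\theta(\tilde{b})$ with $b'$ by uniqueness of path lifting through $F'$. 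Your route is more elementary and avoids the fundamental-group bookkeeping entirely; the paper's route has the side benefit of recording the containment $N(B',x'_{0})\subseteq\theta_{\ast}(\pi_{1}(\tilde{Q},\tilde{x}_{0}))$ in the form that is reused in the universality argument of Theorem \ref{Univ}. One small point you leave implicit (as, to be fair, does the paper, which merely ``notes'' it): the definition of a Galois coalgebra covering $\theta:\tilde{B}\rightarrow B'$ presupposes that $\tilde{B}$ coincides with the span of liftings through $\theta$ of paths and minimal elements of $B'$; this follows from your own identification $\theta(\tilde{b})=b'$ together with $F'(\min(B'))=\min(B)$, but it deserves a sentence.
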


\begin{proof}
Adopt the notation of the preceding paragraph. Fix $x_{0}\in Q,$ $\tilde
{x}_{0}\in\tilde{Q},$ $x_{0}^{\prime}\in Q_{0}^{\prime}$ with $F(\tilde{x}%
_{0})=x_{0}=F^{\prime}(x_{0}^{\prime}).$ The quiver morphism $\theta:\tilde
{Q}\rightarrow Q^{\prime}$ is a continuous map on the topological
realizations. Therefore by e.g. \cite[Ch. 5, Lemma 6.7]{Massey91} it is a
topological covering. It is immediately seen to be a quiver morphism as well,
as the condition $F^{\prime}\theta=F$ forces $\theta$ to behave well on
vertices and arrows. Next, we have $F_{\ast}^{\prime}\theta_{\ast}(\pi
_{1}(\tilde{Q},\tilde{x}_{0}))=F_{\ast}(\pi_{1}(\tilde{Q},\tilde{x}_{0}))$ is
a normal subgroup of $\pi_{1}(Q,x_{0})$; also we have%

\[
F_{\ast}^{\prime}\theta_{\ast}(\pi_{1}(\tilde{Q},\tilde{x}_{0}))\subseteq
F_{\ast}^{\prime}(\pi_{1}(Q^{\prime},x_{0}^{\prime}))\subseteq\pi_{1}%
(Q,x_{0})
\]
from which we conclude that $\theta_{\ast}(\pi_{1}(\tilde{Q},\tilde{x}_{0}))$
is normal in $\pi_{1}(Q^{\prime},x_{0}^{\prime}).$ Thus $\theta$ is a Galois
covering of quivers. Furthermore, note that by Proposition \ref{min} $F_{\ast
}(N(\tilde{B},\tilde{x}_{0}))=N(B,x_{0})=F_{\ast}^{\prime}(N(B^{\prime}%
,x_{0}^{\prime}))$ and $N(B,x_{0})\subseteq F_{\ast}(\pi_{1}(\tilde{Q}%
,\tilde{x}_{0}))=F_{\ast}^{\prime}\theta_{\ast}(\pi_{1}(\tilde{Q},\tilde
{x}_{0})).$ We deduce that $N(B^{\prime},x_{0}^{\prime})\subseteq\theta_{\ast
}(\pi_{1}(\tilde{Q},\tilde{x}_{0})).$ By Theorem \ref{Cov} again we see that
$\theta$ is a Galois coalgebra covering, noting that $\tilde{B}$ is the span
of all liftings of minimal elements and paths in $B^{\prime}.$
\end{proof}

\section{Coverings from Gradings}

\begin{proposition}
\label{smash=cov}Let $B\subseteq\Bbbk Q$ be a pointed coalgebra and let
$\delta:Q_{1}\rightarrow G$ be a connected arrow weighting determining a
grading of $B$. Then $F:Q\rtimes G$ $\rightarrow Q$ is a Galois covering of
quivers and the restriction $F:\tilde{B}\rightarrow B$ is a coalgebra covering.
\end{proposition}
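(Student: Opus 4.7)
The plan is to bootstrap directly off Proposition \ref{Prop1} and Theorem \ref{Cov}, since all the real work has already been done. First, because the arrow weighting $\delta:Q_{1}\rightarrow G$ is connected by hypothesis, Proposition \ref{Prop1} applies verbatim and gives that the canonical map $F:Q\rtimes G\rightarrow Q$ is a Galois covering of quivers with automorphism group $G$. This disposes of the first assertion with essentially no additional argument.

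For the second assertion I want to appeal to Theorem \ref{Cov}, specifically the implication (d)$\Rightarrow$(c). The subtlety is that Theorem \ref{Cov} is formulated in terms of the grading of $\Bbbk Q$ coming from a lifting $L$ of a Galois quiver covering, whereas here the grading is given abstractly by the prescribed weighting $\delta$. The natural bridge is the canonical lifting $L:Q_{0}\rightarrow (Q\rtimes G)_{0}$ defined by $L(x)=x\rtimes 1$. For any arrow $a:x\rightarrow y$ in $Q$, the unique lift of $a$ starting at $L(x)=x\rtimes 1$ is the arrow $a\rtimes 1$ of $Q\rtimes G$, which by definition of the smash coproduct quiver ends at $y\rtimes\delta(a)=L(y)^{\delta(a)}$. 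Hence $\delta_{L}(a)=\delta(a)$ on arrows, and extending multiplicatively to paths shows that the grading of $\Bbbk Q$ induced by $L$ coincides with the given grading.

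Under this identification the hypothesis of the proposition is exactly condition (d) of Theorem \ref{Cov}: $B$ is a homogeneous subcoalgebra of $\Bbbk Q$ and the grading is connected. Invoking (d)$\Rightarrow$(c) of that theorem, the restriction $F:\tilde{B}\rightarrow B$ is a Galois coalgebra covering, as required. I do not anticipate any genuine obstacle; the only point that needs checking with care is the compatibility $\delta_{L}=\delta$, ensuring the abstract grading of $B$ really is the one produced by the quiver covering of Proposition \ref{Prop1}, after which Theorem \ref{Cov} finishes the proof.
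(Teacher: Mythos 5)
Your proof is correct, and it rests on the same two pillars as the paper's argument (Proposition \ref{Prop1} for the quiver covering, Theorem \ref{Cov} for the coalgebra covering), but it enters Theorem \ref{Cov} through a different condition. The paper does not match the given grading against a lifting-induced one; instead it verifies condition (b) directly: a generator $[w^{-1}q^{-1}pw]$ of $N(B,x_{0})$ arises from a minimal, hence homogeneous, element, so $p$ and $q$ have equal weight, the closed walk $w^{-1}q^{-1}pw$ has weight $1_{G}$, and therefore lifts to a closed walk based at $x_{0}\rtimes 1$; this gives $N(B,x_{0})\subseteq F_{\ast}(\pi_{1}(Q\rtimes G,x_{0}\rtimes 1))$ and (b)$\Rightarrow$(c) finishes. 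You instead observe that the canonical lifting $L(x)=x\rtimes 1$ satisfies $\delta_{L}=\delta$ --- your computation $t(a\rtimes 1)=y\rtimes\delta(a)=L(y)^{\delta(a)}$ is exactly right --- so the hypothesis is literally condition (d), and you quote (d)$\Rightarrow$(c). Both routes are sound. Yours is the more economical reduction, since the paper's weight-$1_{G}$ argument is in effect a re-run of the implication (a)$\Rightarrow$(b) of Theorem \ref{Cov} specialized to the smash coproduct quiver, whereas the paper's version has the expository merit of exhibiting concretely why the minimal relations lift to closed walks in $Q\rtimes G$. The one point you rightly flagged --- that homogeneity in Theorem \ref{Cov} refers to the grading induced by a lifting of the covering --- is fully handled by your check that $\delta_{L}=\delta$; note also that homogeneity of $B$ is independent of the choice of lifting anyway, since by Section \ref{LVI} changing the lifting only twists the degree of a path by the vertex weights at its endpoints, which are common to all paths occurring in a given minimal element.
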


\begin{proof}
In view of Proposition \ref{Prop1} and by Theorem \ref{Cov} we need to show
that $N(B,x_{0})\subseteq F_{\ast}(\pi_{1}(Q\rtimes G,x_{0}\rtimes1)).$ Let
$[w^{-1}q^{-1}pw]\in N(B,x_{0})$ be a generator where $\sum\lambda_{i}p_{i}\in
B$ is a homogeneous minimal element with distinct paths $p_{1}=p$ and
$p_{2}=q$, both in $Q(x,y),$ and walk $w$ from $x_{0}$ to $x.$ Then, since $p$
and $q$ have the same weight, $u=w^{-1}q^{-1}pw$ is a closed walk in $Q$
having weight $1_{G}.$ It follows that $u$ lifts to a closed walk
$\widetilde{u}$ in $Q\rtimes G$ starting and ending at $x_{0}\rtimes1.$ We
have shown that $F_{\ast}([\widetilde{u}])=[u]\in F_{\ast}(\pi_{1}(Q\rtimes
G,x_{0}\rtimes1))$ and thus $N(B,x_{0})\subseteq F_{\ast}(\pi_{1}(Q\rtimes
G,x_{0}\rtimes1)).$
\end{proof}

\section{Liftings, weightings and isomorphisms\label{LVI}}

Let $F:\tilde{Q}\rightarrow Q$ be a Galois covering$.$ Let $L,L^{\prime}%
:Q_{0}\rightarrow\tilde{Q}_{0}$ be liftings of $F.$ By Theorem \ref{CSM} there
are covering isomorphisms with $\Bbbk Q\rtimes\Bbbk G\cong\Bbbk\tilde{Q}%
\cong\Bbbk Q\rtimes^{\prime}\Bbbk G$ determined by the liftings ($\rtimes
=\rtimes_{L},\rtimes^{\prime}=\rtimes_{L^{\prime}}$), so we can identify
$\tilde{Q}$ with the smash coproduct quiver $Q\rtimes G$ and then write
\[
L^{\prime}(x)=x\rtimes\gamma(x)
\]
for all $x\in Q_{0}$ where $\gamma:Q_{0}\rightarrow G$ is a function,
depending on $L^{\prime}$, that we call a \textit{vertex weighting}. We write
$\delta$ (resp. $\delta^{\prime})$ for the grading corresponding to $L$ (resp.
$L^{\prime}).$

Given a vertex weighting $\gamma,$ let $\delta^{\gamma}:Q_{1}\rightarrow G$
associate the $\gamma$\textit{-twisted grading} defined by the weight
function
\[
\delta^{\gamma}(a)=\gamma(y)^{-1}\delta(a)\gamma(x)
\]
for all all arrows $a\in Q_{1}(x,y)$, $x,y\in Q_{0}.$ Observe that this
formula extends to all paths (playing the role of the arrow $a)$, i.e., if
$p=a_{n}a_{n-1}\cdots a_{2}a_{1}$ is a path in $Q$ with vertex sequence
$t(a_{n})=x_{n},...,x_{1}=s(a_{2})=t(a_{1}),x_{0}=s(a_{0})$, then we set
$g_{i}=\gamma(x_{i})$ and define $\delta^{\gamma}(p)=\delta(a_{n}%
)\delta(a_{n-1})\cdots\delta(a_{2})\delta(a_{1}).$ We see that
\begin{align*}
\delta^{\gamma}(p)  &  =g_{n}^{-1}\delta(a_{n})g_{n-1}^{-1}g_{n-1}%
\delta(a_{n-1})\cdots g_{2}^{-1}g_{2}\delta(a_{2})g_{1}^{-1}g_{1}\delta
(a_{1})g_{0}\\
&  =\gamma(t(a_{n}))^{-1}\delta(a_{n})\delta(a_{n-1})\cdots\delta(a_{2}%
)\delta(a_{1})\gamma(s(a_{1}))\\
&  =\gamma(t(p))^{-1}\delta(p)\gamma(s(p)).
\end{align*}
An isomorphism $\theta:F\rightarrow F^{\prime}$ of Galois coverings of $Q$ (or
of the coalgebras coverings $\Bbbk\tilde{Q}\overset{F}{\rightarrow}\Bbbk
Q\overset{F^{\prime}}{\leftarrow}\Bbbk Q^{\prime})$ is said to be a
$G$\textit{-isomorphism} if it commutes with the right action of the
automorphism group
\[
G=\frac{\pi_{1}(Q,x)}{F_{\ast}(\pi_{1}(\tilde{Q},\tilde{x}))}=\frac{\pi
_{1}(Q,x)}{F_{\ast}^{\prime}(\pi_{1}(Q^{\prime},x^{\prime}))}%
\]
where $F(\tilde{x})=F^{\prime}(x^{\prime})=x.$ Let $\Bbbk Q\rtimes^{\prime
}\Bbbk G$ be be a smash coproduct coalgebra using a grading $\delta^{\prime
}:Q_{1}\rightarrow G.$ For a $G$-isomorphism of smash coproduct
coverings\textit{ }$\theta:\Bbbk Q\rtimes\Bbbk G\rightarrow\Bbbk
Q\rtimes^{\prime}\Bbbk G$ we have $\theta(p\rtimes g)=p\rtimes^{\prime}g_{p}g$
for some $g_{p}\in G,$ for all paths $p$ and $g\in G.$ We say that the
weighting $\delta^{\prime}$ implicit in the smash coproduct $\Bbbk
Q\rtimes^{\prime}\Bbbk G$ is the $G$\textit{-grading associated to the }%
$G$\textit{-isomorphism }$\theta.$

Given the associated gradings to each lifting, vertex weighting and
$G$-isomorphism $\theta$ we have described, we have the following result.

\begin{proposition}
Let $F:\tilde{Q}\rightarrow Q$ be a Galois covering of quivers and fix a
lifting $L:Q_{0}\rightarrow\tilde{Q}_{0}.$ There are bijections between the
following sets:\newline(a) liftings $L^{\prime}:Q_{0}\rightarrow\tilde{Q}_{0}%
$\newline(b) vertex weightings $\gamma:Q_{0}\rightarrow G$\newline(c)
$G$-isomorphisms of coverings $\Bbbk Q\rtimes\Bbbk G\rightarrow\Bbbk
Q\rtimes^{\prime}\Bbbk G$ \newline Moreover, these bijections preserve the
associated gradings.
\end{proposition}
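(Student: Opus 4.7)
The plan is to derive the bijection between liftings and vertex weightings from the identification $\tilde Q \cong Q \rtimes G$ provided by Theorem \ref{CSM}, and then to attach an explicit $G$-isomorphism to each vertex weighting.

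First I would fix $L$ so that $\tilde Q$ is identified with $Q \rtimes G$. Any lifting $L': Q_0 \to \tilde Q_0$ takes values in $F^{-1}(x) = \{x \rtimes g \mid g \in G\}$, so there is a unique function $\gamma: Q_0 \to G$ with $L'(x) = x \rtimes \gamma(x)$; conversely every $\gamma$ defines a lifting by this formula. In the same step I would verify that the grading $\delta_{L'}$ associated to $L'$ coincides with the twisted grading $\delta^\gamma$: for $a \in Q_1(x,y)$ the lifting of $a$ starting at $L'(x) = x \rtimes \gamma(x)$ is the arrow $a \rtimes \gamma(x)$ in $Q \rtimes G$, which terminates at $y \rtimes \delta(a)\gamma(x) = (y \rtimes \gamma(y))^{\gamma(y)^{-1}\delta(a)\gamma(x)} = L'(y)^{\delta^\gamma(a)}$, exactly matching the definition of $\delta^\gamma$ given in the section.

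For the bijection between vertex weightings and $G$-isomorphisms, given $\gamma$ I would set $\delta' = \delta^\gamma$ and define $\theta_\gamma: \Bbbk Q \rtimes \Bbbk G \to \Bbbk Q \rtimes' \Bbbk G$ on generators by $\theta_\gamma(u \rtimes g) = u \rtimes' \gamma(s(u))^{-1}g$ for $u \in Q_0 \cup Q_1$, with the convention $s(x) = x$ for a vertex $x$. A short source/target check using the identity $\delta'(a) = \gamma(y)^{-1}\delta(a)\gamma(x)$ shows $\theta_\gamma$ is a quiver isomorphism over $Q$, and $G$-equivariance $\theta_\gamma((u \rtimes g)^h) = \theta_\gamma(u \rtimes g)^h$ is immediate from $(c \rtimes g)^h = c \rtimes gh$. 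Via the isomorphism $\Bbbk(Q \rtimes G) \cong \Bbbk Q \rtimes \Bbbk G$ of the proposition following Theorem \ref{CSM}, $\theta_\gamma$ extends uniquely to a $G$-isomorphism of coalgebra coverings. Conversely, given any $G$-isomorphism $\theta$, the discussion preceding the proposition records that $\theta(p \rtimes g) = p \rtimes' g_p g$, and in particular $\theta(x \rtimes g) = x \rtimes' g_x g$ on vertices; defining $\gamma(x) = g_x^{-1}$ and re-running the source/target computation on arrows recovers $\delta'(a) = \gamma(y)^{-1}\delta(a)\gamma(x) = \delta^\gamma(a)$. Hence $\theta = \theta_\gamma$ and $\delta' = \delta^\gamma$, which simultaneously establishes the bijection and the grading-preservation clause.

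The main obstacle I anticipate is purely clerical: keeping the left/right placement of $\gamma$ and $\gamma^{-1}$ straight under the right $G$-action $(c \rtimes g)^h = c \rtimes gh$ and the target rule $t(a \rtimes g) = t(a) \rtimes \delta(a)g$, so that the vertex formula $\theta_\gamma(x \rtimes g) = x \rtimes' \gamma(x)^{-1}g$ is calibrated to match the twist $\delta^\gamma(a) = \gamma(y)^{-1}\delta(a)\gamma(x)$. Once these conventions are pinned down, all three routes produce the same $\delta^\gamma$ from a single $\gamma$, so grading preservation falls out of the constructions without needing a separate argument.
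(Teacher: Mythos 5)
Your proposal is correct and follows essentially the same route as the paper: identify $\tilde Q$ with $Q\rtimes G$ via Theorem \ref{CSM}, read off $\gamma$ from $L'(x)=x\rtimes\gamma(x)$, check $\delta_{L'}=\delta^{\gamma}$ by a target computation, and match vertex weightings with the maps $\theta_{\gamma}(p\rtimes g)=p\rtimes'\gamma(s(p))^{-1}g$. The only cosmetic difference is that you verify $\theta_{\gamma}$ is a coalgebra map at the quiver level via $\Bbbk(Q\rtimes G)\cong\Bbbk Q\rtimes\Bbbk G$, whereas the paper checks the comultiplication directly on paths to deduce $g_{p}=g_{s(p)}$; both arguments are sound.
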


\begin{proof}
By Theorem \ref{CSM} we know that $\Bbbk\tilde{Q}\cong\Bbbk Q\rtimes\Bbbk G$
where $\rtimes$ is the smash coproduct with grading induced by $L.$ Let
$L^{\prime}:Q_{0}\rightarrow\tilde{Q}_{0}$ be a lifting. By the isomorphism,
we may assume $\Bbbk\tilde{Q}=\Bbbk Q\rtimes\Bbbk G$ with $\tilde{Q}=Q\rtimes
G$ and therefore $L^{\prime}(x)=x\rtimes\gamma(x)$ for all $x\in Q_{0}$ for
some $\gamma(x)\in G.$ This produces a vertex lifting $\gamma.$ Since each
such choice of $\gamma$ provides a unique lifting, we have obtained a
bijection between the sets in (a) and (b). Let $\delta^{\prime}:Q_{1}%
\rightarrow G$ be the arrow weighting induced by $L^{\prime}.$ Let $p$ be a
path in $Q(x,y).$ Then $L^{\prime}(p)=p\rtimes\gamma(x)$ is a path in
$\tilde{Q}$ starting at $x\rtimes\gamma(x)$ and ending at%
\begin{align*}
&  y\rtimes\delta(p)\gamma(x)\\
&  =y\rtimes\gamma(y)\gamma(y)^{-1}\delta(p)\gamma(x)\\
&  =L^{\prime}(y)^{\gamma(y)^{-1}\delta(p)\gamma(x)}\\
&  =L^{\prime}(y)^{\delta^{\gamma}(p)}.
\end{align*}
This shows that $\delta^{\prime}$ $=\delta^{\gamma}$, so that the associated
grading is preserved, as claimed.

We move on to demonstrating a grading-preserving isomorphism between (b) and
(c). For any vertex weighting $\gamma:Q_{0}\rightarrow G$, define a map
$\theta_{\gamma}:\Bbbk Q\rtimes\Bbbk G\rightarrow\Bbbk Q\rtimes^{\prime}\Bbbk
G$ by $\theta_{\gamma}(p\rtimes g)=p\rtimes^{\prime}\gamma(x)^{-1}g$ for all
paths $p$ and $g\in G.$ This clearly defines a linear isomorphism with
$F^{\prime}\theta=F$. On the other hand, given a $\Bbbk$-linear isomorphism
$\theta:\Bbbk Q\rtimes\Bbbk G\rightarrow\Bbbk Q\rtimes^{\prime}\Bbbk G$ such
that for all $p\rtimes g\in\Bbbk Q\rtimes\Bbbk G$ for all paths $p$ in $Q$ and
$g\in G$, $\theta(p\rtimes g)=p\rtimes g_{p}g$ for some $g_{p}\in G$, we note
that
\begin{align*}
\Delta(\theta(p\rtimes1))  &  =\Delta(p\rtimes^{\prime}g_{p})\\
&  =\sum_{p=rq}(r\rtimes^{\prime}\delta^{\prime}(q)g_{p})\otimes
(q\rtimes^{\prime}g_{p})
\end{align*}
and on the other hand
\begin{align*}
(\theta\otimes\theta)\Delta(p\rtimes1)  &  =(\theta\otimes\theta)\sum
_{p=rq}(r\rtimes\delta(q))\otimes(q\rtimes1)\\
&  =\sum_{p=rq}(r\rtimes^{\prime}g_{r}\delta(q))\otimes(q\rtimes^{\prime}%
g_{q}).
\end{align*}
Equating the right tensor factors yields $g_{p}=g_{q}$ for all initial
segments $q$ of $p.$ In particular, we see in this situation that $g_{p}$ is
determined by the starting vertex, i.e., $g_{p}=g_{s(p)}$ for all paths $p.$
Let the vertex weighting $\gamma$ be defined by $\gamma(x)=g_{x}^{-1}$ for all
$x\in Q_{0}.$ Next, equating the left tensor factors at $p=q$ results in the
equation $\delta^{\prime}(p)=g_{t(p)}\delta(p)g_{p}^{-1}=\gamma(y)^{-1}%
\delta(p)\gamma(x)=\delta^{\gamma}(p)$ for all $p\in Q(x,y).$ This shows that
$\theta$ is a $G$-coalgebra covering isomorphism if and only if $\delta
^{\prime}=\delta^{\gamma}$ and $\theta=\theta_{\gamma}. $ It follows that we
have a bijective mapping from the set (c) to the set (b) given by
$\gamma\longmapsto\theta_{\gamma},$ which preserves the associated grading.
\end{proof}

\begin{remark}
The set of vertex weightings $G^{Q_{0}}$ forms group under pointwise
multiplication in $G.$ Therefore we can construe the bijections in the Theorem
as group isomorphisms, and the group of gradings (again pointwise) as a
homomorphic image of each of the three isomorphic groups in (a)-(c). The quite
arbitrary choice of the lifting $L$ provides an identity element in the group
of liftings, corresponding to the neutral vertex weighting $x\mapsto1_{G}$,
$x\in Q_{0}.$
\end{remark}

\section{Universality}

\begin{theorem}
\label{Univ}Let $B\subseteq\Bbbk Q$ be a pointed coalgebra. Then \newline(a)
there exists a Galois coalgebra covering $F:\tilde{B}\rightarrow B$ such that
for every Galois coalgebra covering $F^{\prime}:B^{\prime}\rightarrow B$,
there exists a Galois coalgebra covering $E:\tilde{B}\rightarrow B^{\prime}$
such that the following diagram commutes.%
\[%
\begin{array}
[c]{ccccc}%
\tilde{B} &  & \overset{E}{\xrightarrow{\hspace*{.75cm}}} &  & B^{\prime}\\
& \underset{F}{\searrow} &  & \underset{F^{\prime}}{\swarrow} & \\
&  & B &  &
\end{array}
\]
\newline\newline(b) If we fix base points $x_{0}\in B_{0},$ $x_{0}^{\prime}\in
B_{0}^{\prime},$ $\tilde{x}_{0}\in\tilde{B}_{0}$ with $F(\tilde{x}_{0}%
)=x_{0}=F^{\prime}(x_{0}^{\prime}),$ then $E$ can be uniquely chosen so that
$E(\tilde{x}_{0})=x_{0}^{\prime}.$ \newline(c) The Galois covering
$F:\tilde{B}\rightarrow B$ is unique up to isomorphism.
\end{theorem}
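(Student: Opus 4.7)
The overall strategy is to define $\tilde{B}$ as the coalgebra covering of $B$ arising from the topological Galois covering of $Q$ whose characteristic subgroup is the normal subgroup $N(B,x_{0})$ of $\pi_{1}(Q,x_{0})$, and then to deduce its universal property by combining the standard topological lifting criterion for covering spaces with Lemma \ref{morph}. Concretely, since $N(B,x_{0})$ is normal, standard covering space theory \cite{Massey91, Hatcher} produces a connected Galois covering $F\colon\tilde{Q}\to Q$ with basepoint $\tilde{x}_{0}$ satisfying $F_{\ast}(\pi_{1}(\tilde{Q},\tilde{x}_{0}))=N(B,x_{0})$ and deck group $G=\pi_{1}(Q,x_{0})/N(B,x_{0})$. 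The inclusion $N(B,x_{0})\subseteq F_{\ast}(\pi_{1}(\tilde{Q},\tilde{x}_{0}))$ holds trivially (as equality), so by Theorem \ref{Cov} the restriction $F\colon\tilde{B}\to B$---with $\tilde{B}$ defined as the span of all liftings of paths and minimal elements of $B$---is a Galois coalgebra covering.

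For the universal property in (a), let $F'\colon B'\to B$ be any other Galois coalgebra covering, arising from a Galois quiver covering $F'\colon Q'\to Q$, and fix a lift $x_{0}'\in Q_{0}'$ of $x_{0}$. Applying Theorem \ref{Cov} to $F'$ gives $N(B,x_{0})\subseteq F'_{\ast}(\pi_{1}(Q',x_{0}'))$, hence
\[
F_{\ast}(\pi_{1}(\tilde{Q},\tilde{x}_{0}))=N(B,x_{0})\subseteq F'_{\ast}(\pi_{1}(Q',x_{0}')),
\]
and the topological lifting criterion (e.g.\ \cite[Ch.~5, Lemma~6.7]{Massey91}) yields a unique quiver-covering map $\theta\colon\tilde{Q}\to Q'$ with $\theta(\tilde{x}_{0})=x_{0}'$ and $F'\theta=F$. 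I then verify that $\theta(\tilde{B})\subseteq B'$: any spanning element $\tilde{b}\in\tilde{B}$ is the unique lift at its starting vertex of a path or minimal element $b=F(\tilde{b})=F'\theta(\tilde{b})$ of $B$; since $F'$ is a coalgebra covering, $b$ also admits a lift to $B'$ at the vertex $\theta(s(\tilde{b}))$, and by the uniqueness of such lifts used in the proof of Proposition \ref{min} this lift must coincide with $\theta(\tilde{b})$. Lemma \ref{morph} then upgrades $\theta\colon\tilde{B}\to B'$ to a Galois coalgebra covering, finishing (a).

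Part (b) is then the uniqueness half of the topological lifting criterion already invoked, specialized to the prescribed basepoints. For (c), if $F_{1}\colon\tilde{B}_{1}\to B$ also has the universal property, applying (a) and (b) with compatible basepoints in both directions produces basepoint-preserving morphisms $E\colon\tilde{B}\to\tilde{B}_{1}$ and $E_{1}\colon\tilde{B}_{1}\to\tilde{B}$ over $B$; their compositions are basepoint-preserving endomorphisms over $B$ and hence, by the uniqueness in (b), are the identities, so $E$ and $E_{1}$ are mutually inverse isomorphisms.

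The step I expect to be the main obstacle is verifying that the topological/combinatorial lift $\theta\colon\tilde{Q}\to Q'$ actually carries $\tilde{B}$ into $B'$ rather than merely into $\Bbbk Q'$; this is precisely where the hypothesis that $F'$ is a \emph{coalgebra} covering (not merely a quiver covering) and the uniqueness of liftings of minimal elements from Proposition \ref{min} together bridge the purely topological input from the lifting criterion and the coalgebraic output demanded by the theorem.
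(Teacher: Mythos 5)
Your proposal is correct and follows essentially the same route as the paper: both take $\tilde{Q}$ to be the Galois covering with characteristic subgroup $N(B,x_{0})$, invoke Theorem \ref{Cov} to obtain the coalgebra covering $F:\tilde{B}\rightarrow B$, produce $E$ from the classification/lifting criterion for coverings, and conclude via Lemma \ref{morph}. The only difference is presentational --- the paper realizes $\tilde{Q}$ and the intermediate cover concretely as smash coproducts $Q\rtimes G/N$ and $Q\rtimes G/N^{\prime}$, whereas you work with the abstract topological covering; your explicit verification that $\theta(\tilde{B})\subseteq B^{\prime}$ via uniqueness of lifts of minimal elements is a point the paper leaves implicit.
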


\begin{proof}
Fix $x_{0}\in Q_{0}.$ The smash coproduct $Q\rtimes\pi_{1}(Q,x_{0})$ is the
universal covering of $Q$ where the connected grading $\hat{\delta}%
:Q_{1}\rightarrow\pi_{1}(Q,x_{0})$ arises from a lifting $L$ such that
$L(x_{0})=x_{0}\rtimes1_{\pi_{1}(Q,x_{0})}.$ Let $G=\pi_{1}(Q,x_{0}),$
$N=N(B,x_{0})$ and $\tilde{Q}=Q\rtimes G/N,$ where the grading is induced by
composing with the natural map onto $G/N$, i.e., $\delta$ is the composition
\[
Q_{1}\overset{\hat{\delta}}{\rightarrow}G\rightarrow G/N
\]
This is immediately seen to give a connected grading of $\Bbbk Q$. Now observe
that $F_{\ast}(\pi_{1}(\tilde{Q},x_{0}\rtimes1))=N,$ as the elements of $N$
are precisely the equivalence classes of closed walks based at $x_{0}$ that
lift to closed walks at $x_{0}\rtimes1.$ By Theorem \ref{Cov}, we obtain a
coalgebra covering $F:\tilde{B}\rightarrow B$ where $\tilde{B}=B\rtimes G$ is
the span of liftings of minimal relations of $B.$

Let $F^{\prime}:B^{\prime}\rightarrow B$ be the hypothetical coalgebra
covering arising as the restriction of a covering $F^{\prime}:Q^{\prime
}\rightarrow Q$ with base vertex $x_{0}^{\prime}\in Q_{0}^{\prime},$
$F^{\prime}(x_{0}^{\prime})=x_{0}$. Let $N^{\prime}=F_{\ast}^{\prime
}(N(B^{\prime},x_{0}^{\prime}))$. By Theorem \ref{Cov} we have
\[
N^{\prime}\supseteq N=F_{\ast}(N(\pi_{1}(\tilde{Q},\tilde{x}_{0})))
\]
Thus we may form the smash coproduct $Q\rtimes G/N^{\prime}$ using the induced
connected grading via $Q_{1}\overset{\delta}{\rightarrow}G\rightarrow
G/N^{\prime}.$ Here $F_{\ast}^{\prime}\left(  \pi_{1}(Q\rtimes G/N^{\prime
},x_{0}\rtimes1\right)  )=N^{\prime}=F_{\ast}^{\prime}(N(B^{\prime}%
,x_{0}^{\prime})),$ so by standard results e.g. \cite[Chapter 5, Cor.
6.4]{Massey91}, there is a unique isomorphism of coverings $E:Q\rtimes
G/N^{\prime}\tilde{\rightarrow}Q^{\prime}$ sending $x_{0}\rtimes1$ to
$x_{0}^{\prime}\footnote{If we identify $Q^{\prime}$ with $Q\rtimes
G/N^{\prime}$ and $x_{0}^{\prime}=x\rtimes\bar{g}$ with $\bar{g}\in G/N,$ then
the isomorphism is concretely given by the right action of $\bar{g}.$}.$ We
have the commuting diagram of quivers%
\[%
\begin{array}
[c]{ccccc}%
Q\rtimes G/N & \longrightarrow & Q\rtimes G/N^{\prime} & \longrightarrow &
Q^{\prime}\\
& \underset{F}{\searrow} &  & \underset{F^{\prime}}{\swarrow} & \\
&  & Q &  &
\end{array}
\]
The horizontal composed map $Q\rtimes G/N\rightarrow Q^\prime$ provides a
morphism of coalgebra coverings by Lemma \ref{morph}$.$ This proves the
assertions (a) and (b).

The uniqueness of $F$ follows since it is the unique Galois covering such that
$F_{\ast}\left(  \pi_{1}(\tilde{Q},x_{0}\rtimes1\right)  )=N$.
\end{proof}

\bigskip

The covering coalgebra $Q\rtimes G/N$ in this result is the \textit{universal
Galois covering of} $B\subseteq kQ$.

\section{Examples}

\begin{example}
\label{loop}Let $Q$ be the quiver consisting of a single loop $a$ and single
vertex $x.$ The universal cover $\tilde{Q}$ is a quiver of type $\mathbb{A}%
_{\infty}$ with all arrows in the same direction. The automorphism group
$\pi_{1}(Q,x)$ is infinite cyclic, generated by $g=[a]$. A connected grading
is given by $\delta(a)=g.$ Since there is a single vertex and $G$ is abelian,
Theorem \ref{LVI} says that all other liftings yield the same grading. The
path $i\rightarrow\cdot\rightarrow\cdots\cdot\rightarrow i+\ell,$
$i\in\mathbb{Z},$ $\ell\in\mathbb{N}$ corresponds to $a^{\ell}\rtimes g^{i}$
$\in\Bbbk Q\rtimes G.$ All other coverings of $Q$ are given by the action of a
subgroup $<g^{n}>$ of $G$ and are easily seen to be the cyclic quiver $\bar
{0}\rightarrow\bar{1}\rightarrow\cdots\rightarrow\overline{n-1}\rightarrow
\bar{n}=\bar{0}$ of length $n\in\mathbb{N}$. The only subcoalgebras of $\Bbbk
Q$ are the truncations $B=\Bbbk\{x,a,a^{2}\cdots a^{n-1}\}$. There are no
minimal elements, so the universal covering $\tilde{B}$ is isomorphic to
$B\rtimes G$ where the path $i\rightarrow\cdot\rightarrow\cdots\cdot
\rightarrow i+\ell$ corresponds to $a^{\ell}\rtimes g^{i},$ $0\leq\ell\leq
n,\quad i\in\mathbb{Z}$. Each finite-dimensional comodule for $\Bbbk\tilde{Q}$
corresponds to a quiver representation $\Bbbk\rightarrow\Bbbk\rightarrow
\cdots\cdot\rightarrow\Bbbk,$ which pushes down to the $\ell$-dimensional
representation of $Q$ corresponding to the comodule $\Bbbk\{x,a,a^{2}\cdots
a^{\ell-1}\}$. Since these comodules are precisely the representatives of
finite-dimensional indecomposables for $\Bbbk Q$, it is clear that the
forgetful functor $\mathcal{M}^{k\tilde{Q}}\approx\mathrm{Gr}^{\Bbbk
Q}\rightarrow\mathcal{M}^{kQ}$ is dense. We note here that the indecomposable
representations of $Q$ corresponding to indecomposables over the path algebra
$k[a,a^{-1}]$ with nonzero (Jordan) eigenvalue are not comodules as they are
not locally nilpotent (cf. \cite{Chin2004}).
\end{example}

\begin{example}
\label{free}Let $Q$ be the quiver consisting of two loops $a,b$ and single
vertex $x.$ The fundamental group $G$ is a free group on two generators. The
quiver $\tilde{Q}$ is the Cayley graph of the free group on $a,b$ and the
vertices of $\tilde{Q}$ are indexed by the elements of $G,$ and these group
elements correspond to vertex weightings. Distinct vertex weightings give rise
to distinct gradings, which are thus infinite in number.
\end{example}

\begin{example}
\label{Kron}Let $Q$ be the Kronecker quiver
\[
x\rightrightarrows y
\]
consisting of the two arrows $a,b$ from the vertex $x$ to the vertex $y$. The
fundamental group is infinite cyclic. Specifying an arrow weighting
$\delta:Q_{0}\rightarrow G$ by $\delta(a)=1$ and $\delta(b)=g,$ we get the
covering quiver $\tilde{Q}=Q\rtimes G$ of type $\mathbb{A}_{\infty}$ with
zig-zag orientation%
\[
\cdots\leftarrow x_{0}\rightarrow y_{0}\leftarrow x_{1}\rightarrow
y_{1}\leftarrow x_{2}\rightarrow\cdots
\]
where $x_{n}=x\rtimes g^{n}$ and $y_{n}=y\rtimes g^{n}.$ Here again there are
infinitely many distinct gradings, with isomorphic smash coproducts. The
finite-dimensional indecomposable comodules for $\Bbbk\tilde{Q}$ are given by
the representations of $\tilde{Q}.$ By the theory of special biserial
(co)algebras, see \cite{Chin2010} (and \cite{Erdmann90}), these
representations are given by the strings $\Bbbk-\Bbbk-\cdots-\Bbbk-\Bbbk$ of
finite length where $-$ denotes $\leftarrow$ or $\rightarrow.$ On the other
hand, the finite-dimensional indecomposable representations of $Q$ are
well-known to be given by string modules and a one-parameter family of band
modules \cite{Erdmann90}. Note that the band modules correspond to comodules
in $\mathcal{M}^{B},$ as they are locally nilpotent (cf. \cite{Chin2004}), in
contrast to Example \ref{loop}. The band comodules correspond to non-gradable
comodules for $\Bbbk Q.$ Thus the forgetful functor $\mathcal{M}^{\tilde{B}%
}\approx\mathrm{Gr}^{B}\rightarrow\mathcal{M}^{B}$ is not dense.
\end{example}

\begin{example}
\label{SL2}Consider the coordinate Hopf algebra $\Bbbk_{\zeta}[SL(2)]$ at a
root of unity $\zeta$ of odd order $\ell$ over a field $\Bbbk$ of
characteristic zero. The basic coalgebra decomposes in to block coalgebras
$B_{r}$, $r=0,1,2$, $\cdots,\ell-2$. The nontrivial blocks are indexed by
integers $r\leq\ell-2,$ and they are all isomorphic \cite{Chin2010}. Each
nontrivial block $B$ is isomorphic to the subcoalgebra of path coalgebra of
the quiver $Q:$
\[
x_{0}\overset{b_{0}}{\underset{a_{0}}{\leftrightarrows}}x_{1}\overset{b_{1}%
}{\underset{a_{1}}{\leftrightarrows}}x_{2}\overset{b_{2}}{\underset{a_{2}%
}{\leftrightarrows}}\cdot\cdot\cdot
\]
spanned by the by group-likes $x_{i}$ corresponding to vertices, arrows
$a_{i},b_{i},$ $i\geq0,$ together with coradical degree two elements
\begin{align*}
d_{0} &  :=b_{0}a_{0}\\
d_{i+1} &  :=a_{i}b_{i}+b_{i+1}a_{i+1},\text{ }i\geq0.
\end{align*}
Therefore $N(B,x_{0})$ is generated by homotopy classes closed walks of the
form $w^{-1}b_{i}^{-1}a_{i}^{-1}b_{i+1}a_{i+1}w$ for appropriate walks $w$
(from $x_{0}$ to $x_{i}$), and it follows that the fundamental group of
$B\subset\Bbbk Q$ is the infinite cyclic group $G=<g>$, generated by
$g=[b_{0}a_{0}]$ and the universal cover is a quiver of type $\mathbb{ZA}%
_{\infty}.$ For example letting $\delta(b)=g^{-1}$ and $\delta(a)=1_{G}$ we
obtain a connected grading of $B$ and covering quiver $Q\rtimes G$%
\[%
\begin{array}
[c]{cccccc}
&  & \vdots &  &  & \\
x_{0,1} & \overset{a_{01}}{\xrightarrow{\hspace*{.4cm}}} & x_{1,1} &
\overset{}{\xrightarrow{\hspace*{.4cm}}} & x_{2,1} & \\
& \swarrow &  & \overset{}{\swarrow} &  & \\
x_{0,0} & \xrightarrow{\hspace*{.4cm}} & x_{1,0} &
\xrightarrow{\hspace*{.4cm}} & x_{2,0} & \cdots\\
& \swarrow &  & \overset{b_{1,-1}}{\swarrow} &  & \\
x_{0,-1} & \xrightarrow{\hspace*{.4cm}} & x_{1,-1} &
\xrightarrow{\hspace*{.4cm}} & x_{2,1} & \\
&  & \vdots &  &  &
\end{array}
\]
with vertices $x_{in}=x_{i}\rtimes g^{n};$ $i\in\mathbb{N},$ $n\in\mathbb{Z}.$
The arrows are $a_{in}=a_{i}\rtimes g^{n}$ starting at $x_{in}$ and ending at
$x_{i+1,n},$ and $b_{in}=b_{i}\rtimes g^{n}$ starting at $x_{i+1,n+1}\rtimes
g^{n+1}$ and ending at $x_{in}.$ The coalgebra $\tilde{B}$ is spanned by the
vertices, arrows, paths $d_{0n}:=b_{0,n-1}a_{0,n}$ and the minimal elements
$d_{i+1,n}:=a_{in}b_{in}+b_{i+1,n}a_{i+1,n+1},$ $i\geq0.$\newline The finite
dimensional representations of $B$ are determined in \cite{Chin2010} . Each
arrow $a_{i}$ generates the length two right comodule known as a \textit{Weyl
comodule}, with composition series $%
\genfrac{.}{.}{0pt}{}{\Bbbk x_{i}}{\Bbbk x_{i+1}}%
.$ The arrow $b_{i}$ generates the dual Weyl comodule. Any grading of $B$ is
determined by an arrow weighting, so each Weyl and dual Weyl comodule is
obviously thus graded. The coalgebra $B$ is an example of a special biserial
coalgebra and all finite dimensional comodules are string comodules. Each of
these comodules correspond to a walk (i.e. a string) of one of the following
forms
\begin{align*}
&  a_{t}b_{t-1}^{-}...a_{s-1}b_{s+1}^{-}a_{s}\\
&  b_{t}^{-}a_{t-1}...b_{s-1}a_{s+1}b_{s}^{-}\\
&  a_{t}b_{t-1}^{-}...a_{s+1}b_{s}^{-}\\
&  b_{t}^{-}a_{t-1}...b_{s+1}^{-}a_{s}%
\end{align*}
where the subscripts form an interval $[s,t]$ of nonnegative integers strictly
increasing from right to left in the walk . Each of these can be constructed
as an iteration of pullbacks and pushouts of Weyl comodules and dual Weyl
comodules (with isomorphic socles or tops), both of which are gradable. Since
the simple comodules have trivial weighting, it follows that every
finite-dimensional $B$-comodule is gradable. Thus the forgetful functor
$\mathrm{Gr}^{B}$ $\rightarrow\mathcal{M}^{B}$ is dense.
\end{example}

\bibliographystyle{plain}
\bibliography{chinsrefs}

\end{document}